\definecolor{myblue}{HTML}{003388}
\definecolor{mygreen}{HTML}{338800}
\definecolor{myred}{HTML}{880033}
\newtheorem{thm}{Theorem}[section]
\newtheorem{lem}{Lemma}[subsection]
\newtheorem*{thm*}{Theorem}
\newtheorem{example}{Example}[subsection]
\newcommand{\beq}{\begin{equation}}
\newcommand{\eeq}{\end{equation}}
\newcommand{\ch}{\mathop{\mathrm{ch}}}
\newcommand{\Cc}{\mathcal{C}}
\newcommand{\Ec}{\mathcal{E}}
\newcommand{\Dc}{\mathcal{D}}
\numberwithin{equation}{section}
\def\author@andify{%
  \nxandlist {\unskip ,\penalty-1 \space\ignorespaces}%
    {\unskip {} \@@and~}%
    {\unskip \penalty-2 \space \@@and~}%
}
\tikzset{node distance=2em, ch/.style={circle,draw,on chain,inner sep=2pt},chj/.style={ch,join},every path/.style={shorten >=4pt,shorten <=4pt},line width=1pt,baseline=-1ex}
\let\dlabel=\alabel
\newcommand{\dnode}[2][chj]{%
\node[#1,label={below:\dlabel{#2}}] {};
}
\newcommand{\dnodenj}[1]{%
\dnode[ch]{#1}
}
\newcommand{\dydots}{%
\node[chj,draw=none,inner sep=1pt] {\dots};
}
\begin{document}

\title{A note on principal subspaces of the affine Lie algebras in types $B_l^{(1)}$, $C_l^{(1)}$, $F_4^{(1)}$ and $G_2^{(1)}$}

\author{Marijana Butorac}

\address{Department of Mathematics, University of Rijeka, Radmile Matej\v{c}i\'{c} 2, 51\,000 Rijeka, Croatia}
\email{mbutorac@math.uniri.hr}

\subjclass[2000]{Primary 17B67; Secondary 05A19, 17B69}

\keywords{principal subspaces, combinatorial bases, quasi-particles, vertex operator algebras, affine Lie algebras}

\begin{abstract} 
We  construct  quasi-particle bases of principal subspaces of standard modules $L(\Lambda)$, where $\Lambda=k_0\Lambda_0+k_j\Lambda_j$, and $\Lambda_j$ denotes the fundamental weight of affine Lie algebras of type  $B_l^{(1)}$, $C_l^{(1)}$, $F_4^{(1)}$ or $G_2^{(1)}$ of level one.
From the given bases we find characters of principal subspaces.
\end{abstract}

\maketitle

\section*{Introduction}
This paper is a continuation of our study  \cite{Bu1,Bu2,Bu3,BK} of the principal subspaces associated to the standard module $L(k\Lambda_0)$, for $k \geq 1$, of non-simply laced affine Lie algebras of type  $B_l^{(1)}$, $C_l^{(1)}$, $F_4^{(1)}$ and $G_2^{(1)}$. In \cite{G1}, G. Georgiev constructed quasi-particle bases of  principal subspaces of standard modules $L(\Lambda)$ with the rectangular highest weight, that is, $\Lambda$ is of the form $k_0\Lambda_0+k_j\Lambda_j$, and $\Lambda_j$ denotes the fundamental weight of level one, in the case of affine Lie algebra of type $A_l^{(1)}$. In \cite{BK}, we extended Georgiev's approach to the principal subspaces which correspond to rectangular weights of affine Lie algebras of type $D_l^{(1)}$, $E_6^{(1)}$, $E_7^{(1)}$ and $E_8^{(1)}$.

The main result of this work is the construction of quasi-particle bases of principal subspaces of standard modules $L(\Lambda)$ with the rectangular highest weight for the remaining cases of untwisted affine Lie algebras (Theorem \ref{t1}). From the constructed bases we find characters of principal subspaces  (Theorem \ref{thm_karakter}). Obtained characters of principal subspaces are connected with the characters of parafermio\-nic field theories (see \cite{AKS, FS,GG,G2,Gep,KNS}). This connection is further studied in the paper \cite{BKP}.

Our construction follows closely the construction of quasi-particle bases of principal subspaces of vacuum standard modules $L(k\Lambda_0)$  in \cite{Bu1,Bu2,Bu3,BK,G1}. The starting point in this construction is to find all relations among quasi-particles, which are then used to find the spanning sets. In this paper we use these results to construct the spanning sets of principal subspaces of $L(\Lambda)$. The main difference with the case of the principal subspace of $L(k\Lambda_0)$ is in the formulation of initial conditions (Lemma \ref{pocuv1} and Lemma \ref{pocuv2}) in terms of quasi-particles. 

The main idea of the proof of linear independence  is, as in\cite{Bu1,Bu2,Bu3,BK,G1}, from a finite linear combination $\sum_{a \in A} c_ab_av=0$  of quasi-particle monomial vectors $b_av$ from the spanning set, obtain the following linear combination $\sum_{a \in A} c_ab'_av=0$, where $b'_av$ are still from the spanning set, such that $b_a < b'_a$, with the respect to the linear order on quasi-particles. To do this we use coefficients of intertwining operators for vertex operator algebra $L(\Lambda_0)$ associated with the affine Lie algebra from \cite{Li1,Li2,Li3}, together with simple current maps in the case of affine Lie algebras of type $B_l^{(1)}$ and $C_l^{(1)}$, and Weyl group translation operators among standard modules of level $1$.

\section{Preliminaries}\label{prelim}
Let $\mathfrak{g}$ be a complex simple Lie algebra of type $B_l$, $C_l$, $F_4$ or $G_2$ with the triangular decomposition $\mathfrak{g} =\mathfrak{n}_{-}\oplus \mathfrak{h}\oplus \mathfrak{n}_{+}$, where $\mathfrak{h}$ denotes the Cartan subalgebra of $\mathfrak{g}$. Let $\left< \cdot, \cdot \right>$ be the invariant symmetric nondegenerate bilinear form on $\mathfrak{g}$ normalized so that long roots have length $\sqrt{2}$. Denote by $R_{+} \subset \mathfrak{h}^{\ast}$ the set of positive roots of $\mathfrak{g}$, by $R$ the set of roots, by $Q$ the root lattice and by $\{\alpha_1, \ldots , \alpha_l\}$ the subset of simple roots. We fix the standard choice of simple roots, which we now recall. Denote by $\{\epsilon_1, \ldots , \epsilon_l \}$ the usual orthonormal basis of the $\mathbb{R}^l$. Then in the case of $B_l$, we have the following set of simple roots
$$\Big\{\alpha_1= \epsilon_1-\epsilon_2  , \ldots, \alpha_{l-1}=\epsilon_{l-1}-\epsilon_l ,\alpha_l= \epsilon_l\Big\},$$  
which correspond to the following labeling of Dynkin diagram
\begin{center}\begin{tikzpicture}[start chain]
\dnode{1}
\dnode{2}
\dydots
\dnode{l-1}
\dnodenj{l}
\path (chain-4) -- node[anchor=mid] {\(\Rightarrow\)} (chain-5);
\end{tikzpicture} \ .\end{center}
In the case of $C_l$, we will use the following notation for the basis of the root system
$$\Big\{\alpha_1=\sqrt{2}\epsilon_l , \alpha_2=\frac{1}{\sqrt{2}}\left(\epsilon_{l-1}-\epsilon_l\right),\ldots,\alpha_{l-1}=\frac{1}{\sqrt{2}}\left(\epsilon_{2}-\epsilon_3\right),\alpha_l=\frac{1}{\sqrt{2}}\left(\epsilon_1-\epsilon_2\right)\Big\}, $$ 
so that we have the following labeling of the Dynkin diagram 
\begin{center}\begin{tikzpicture}[start chain]
\dnode{l}
\dnode{l-1}
\dydots
\dnode{2}
\dnodenj{1}
\path (chain-4) -- node[anchor=mid] {\(\Leftarrow\)} (chain-5);
\end{tikzpicture} \ .\end{center}
In the case of $F_4$ we have 
 \begin{center}\begin{tikzpicture}[start chain]
\dnode{1}
\dnode{2}
\dnodenj{3}
\dnode{4}
\path (chain-2) -- node[anchor=mid] {\(\Rightarrow\)} (chain-3);
\end{tikzpicture} \ ,
\end{center}
where $$\Big\{ \alpha_1=\epsilon_2 -\epsilon_3, \alpha_2=\epsilon_3-\epsilon_4, \alpha_3=\epsilon_4, \alpha_4=\frac{1}{2}(\epsilon_1-\epsilon_2-\epsilon_3-\epsilon_4)\Big\},$$
and in the case of $G_2$ we have
$$\Big\{\alpha_1=\frac{1}{\sqrt{3}}(-2\epsilon_1+\epsilon_2+\epsilon_3) , \alpha_2={\frac{1}{\sqrt{3}}(\epsilon_1-\epsilon_2)}\Big\},$$ 
with the following Dynkin diagram 
\begin{center}\begin{tikzpicture}[start chain]
\dnodenj{1}
\dnodenj{2}
\path (chain-1) -- node {\(\Rrightarrow\)} (chain-2);
\end{tikzpicture} \ .
\end{center}
For every $\alpha \in R_{\pm}$, denote by $x_{\alpha}$ the generator of $\mathfrak{n}_{\pm}$. Denote by $\{\lambda_1, \ldots , \lambda_l\}$ the set of fundamental weights of $\mathfrak{g}$, where 
$$ \lambda_i=  \epsilon_{1}+\cdots +\epsilon_{i} \ \text{for} \  i \neq l, \ \text{and} \ \lambda_l= \frac{1}{2}(\epsilon_{1}+\cdots +\epsilon_{l})  \ \text{in the case of} \ B_l,$$ 
$$  \lambda_i= \frac{1}{\sqrt{2}}(\epsilon_{1}+\cdots +\epsilon_{l-i+1})  \ \text{in the case of} \ C_l,$$ 
$$ \lambda_1=  \epsilon_{1}+ \epsilon_{2}, \lambda_2=2\epsilon_{1}+\epsilon_{2}+\epsilon_{3}, \lambda_{3}=\frac{1}{2}(3\epsilon_{1}+\epsilon_{2}+\epsilon_{3}+\epsilon_{4}),\lambda_4= \epsilon_1  \ \text{in the case of} \ F_4, $$ 
$$ \lambda_1=\frac{1}{\sqrt{3}}(-\epsilon_1-\epsilon_2+2\epsilon_3) , \lambda_2={\frac{1}{\sqrt{3}}(-\epsilon_2+\epsilon_3)}  \ \text{in the case of} \ G_2.$$
We identify $\mathfrak{h}$ with $\mathfrak{h}^{\ast}$ using form $\left< \cdot, \cdot \right>$. Thus, the fundamental weights are viewed as elements of $\mathfrak{h}$ (cf. \cite{H}).

The affine Kac-Moody Lie algebra $ \widetilde{\mathfrak{g}}$ associated with $\mathfrak{g}$ is infinite-dimensional vector space
$$ \widetilde{\mathfrak{g}}=\mathfrak{g}\otimes \mathbb{C}[t,t^{-1}]\oplus \mathbb{C}c\oplus \mathbb{C}d,$$
where $c$ denotes the canonical central element and $d$ denotes the degree operator, equipped with the commutation relations 
$$
\left[x(m),y(n)\right]= \left[x, y\right](m+n)+ \left\langle x, y \right\rangle m \delta_{m+n\,0}\, c, 
$$
$$\left[d,x(m)\right]=mx(m)\ \text{and} \left[d,c\right]=0,$$
for all $x,y \in \mathfrak{g}$, $m,n \in \mathbb{Z}$ (cf. \cite{K}).  The generating functions for elements $x(m)=x\otimes t^n$ of the affine algebra are defined by 
\beq \nonumber
x(z)=\sum_{m \in \mathbb{Z}}x(m)z^{-m-1}.
\eeq
Denote by $\{\alpha_0, \alpha_1, \ldots , \alpha_l\}$ the set of simple roots, and by $\{\Lambda_0, \Lambda_1, \ldots , \Lambda_l\}$ the set of fundamental weights of $ \widetilde{\mathfrak{g}}$. 

We consider rectangular weights, i.e. the  highest weights of the form 
\beq \label{s1}
\Lambda=k_0\Lambda_0+k_j\Lambda_j,
\eeq
where $k_0, k_j \in \mathbb{Z}_+$ and $\Lambda_j$ denotes the fundamental weight such that $\left< \Lambda_j, c\right>=1$.  When $ \widetilde{\mathfrak{g}}$ is of type $B_l^{(1)}$ we have $j=1,l$, in the case of $C_l^{(1)}$ $j=1, \ldots, l$, in the case of $F_4^{(1)}$ $j$ is equal to 4 and in the case of $G_2^{(1)}$ $j=2$ (cf. \cite{K}). Denote by $L(\Lambda)$ the standard (i.e. integrable highest weight) $ \widetilde{\mathfrak{g}}$-module with a highest weight as in (\ref{s1}). With $k=\Lambda(c)$ denote the level of $ \widetilde{\mathfrak{g}}$-module $L(\Lambda)$, $k = k_0 +k_j$.

For every simple root $\alpha_i$, $1 \leq i \leq l$ denote by $\mathfrak{sl}_2(\alpha_i) \subset \mathfrak{g}$ a
subalgebra generated by $x_{\alpha_i}$ and $x_{-\alpha_i}$, and let 
$\widetilde{\mathfrak{sl}}_2(\alpha_i) = \mathfrak{sl}_2(\alpha_i) \otimes \mathbb C[t, t\sp{-1}]
\oplus \mathbb{C}c_{\alpha_i} \oplus \mathbb{C}d \subset \widetilde{\mathfrak{g}}$ 
be the corresponding affine Lie algebra of type $A^{(1)}_1$ with the canonical central element 
\beq\nonumber c_{\alpha_i} = \frac{2c}{\left<\alpha_i, \alpha_i \right>}.
\eeq
The restriction of $L(\Lambda)$ to $\widetilde{\mathfrak{sl}}_2(\alpha)$ is standard module of level
\beq\nonumber k_{\alpha_i} = \frac{2k}{\left<\alpha_i, \alpha_i \right>}.
\eeq
For later use we introduce the following notation
\beq \label{jt}
j_t=\left\{ \begin{array}{ccrcccl} 0  & \text{for} &1 &  \leq&  t& \leq& \nu_j k_0+(\nu_j-1)k_j, \ t > k_{\alpha_j}\\ j  & \text{for} & \nu_j k_0+(\nu_j-1)k_j+1 &\leq & t&\leq &k_{\alpha_j}\end{array} \right. ,
\eeq
where $\nu_j$ denotes $\frac{2}{\left<\alpha_j, \alpha_j\right>}$.

For each fundamental $ \widetilde{\mathfrak{g}}$-module $L(\Lambda_j)$ fix a highest weight vector $v_{\Lambda_j}$. By complete reducibility of tensor products
of standard modules, for level $k > 1$ we have
\beq \nonumber
L(\Lambda) \subset L(\Lambda_j)^{\otimes k_j}\otimes  L(\Lambda_0)^{\otimes k_0} ,
\eeq
with a highest weight vector
\beq \nonumber
v_{\Lambda} =  v_{\Lambda_j}^{\otimes k_j} \otimes  v_{\Lambda_0}^{\otimes k_0}.
\eeq

Consider $ \widetilde{\mathfrak{g}}$-subalgebra
\beq \nonumber
\widetilde{\mathfrak{n}}_{+}=\mathfrak{n}_{+} \otimes \mathbb{C}[t,t^{-1}].
\eeq
{\em The principal subspace} $W_{L(\Lambda)}$ of $L(\Lambda)$ is defined as
\beq \nonumber
W_{L(\Lambda)}=U\left(\widetilde{\mathfrak{n}}_{+}\right)  v_{\Lambda}, 
\eeq
(cf. \cite{FS}). 

This space is generated by operators from
\begin{equation*}
U = U(\widetilde{\mathfrak{n}}_{\alpha_l})\cdots U(\widetilde{\mathfrak{n}}_{\alpha_1}),\end{equation*}
which act on the highest weight vector $v_{\Lambda}$ (see Lemma 3.1 in \cite{G1} and also \cite{Bu1,Bu2,Bu3,BK}), where  \beq \nonumber
\widetilde{\mathfrak{n}}_{\alpha_i}=\mathbb{C}x_{\alpha_i} \otimes \mathbb{C}[t,t^{-1}], \ \ 1 \leq i \leq l.
\eeq

\section{Quasi-particle bases}
In this section, we first recall the notion and some basic facts about quasi-particles from \cite{G1,Bu1,Bu2,Bu3, BK}. Then we determine the spanning set of the principal subspace $W_{L(\Lambda)}$.

Recall that the simple vertex operator algebra $L(k\Lambda_0)$ associated with the integrable highest weight module of $\widetilde{\mathfrak{g}}$ with level $k$ is generated by $x(-1)v_{k\Lambda_0}$ for $x \in \mathfrak{g}$ such that
$$
Y(x(-1)v_{k\Lambda_0}, z)=x(z),
$$
where $v_{k\Lambda_0}$ is the vacuum vector, (cf. \cite{FLM,LL}). Moreover, the level $k$ standard $\widetilde{\mathfrak{g}}$-modules are modules for this vertex operator algebra. 

We will consider the vertex operators 
\beq\label{vopquasi}
x_{r\alpha_i}(z)=Y(x_{\alpha_i}(-1)^r v_{k\Lambda_0}, z)=\sum_{m\in\mathbb{Z}} x_{r\alpha_i}(m) z^{-m-r}=\underbrace{x_{\alpha_i}(z)\cdots x_{\alpha_i}(z)}_{r\text{ times}}
\eeq
associated with the vector $x_{\alpha_i}(-1)^r v_{k\Lambda_0} \in L(k\Lambda_0)$. Following \cite{G1}, for a fixed positive integer $r$ and a fixed integer $m$ define the  {\em quasi-particle of color $i$, charge $r$ and  energy $-m$} as the coefficient  $x_{r\alpha_i}(m)$ of \eqref{vopquasi}.

Note that charges of quasi-particles $x_{r\alpha_i}(z)$ in our quasi-particle basis monomial will be less or equal to $k_{\alpha_i}$, since 
\beq\label{rel1}
x_{(k_{\alpha_i}+1)\alpha_i}(z)=0
\eeq
on $L(\Lambda)$ (see \cite{LL}, \cite{LP}, \cite{MP}). Also, from the definition (\ref{vopquasi}) follows that
\beq\label{rel2}
x_{r\alpha_i}(z)v_{k\Lambda_0} \in W_{L(\Lambda)}[[z]].
\eeq

In the case of $L(\Lambda_j)$ we have the following relations
\begin{lem}\label{pocuv1}
 In the case of affine Lie algebras $ \widetilde{\mathfrak{g}}$ of type $B_l^{(1)}$ and  $C_l^{(1)}$ on $L(\Lambda_1)$, we have
\begin{eqnarray}\label{inBC1}
x_{ \alpha_1}(-1)v_{\Lambda_1}&=&0,\\
\label{inBC2}
x_{ \alpha_1}(-2)v_{\Lambda_1}&\neq&0,\\
\label{inBC3}
x_{ \alpha_i}(-1)v_{\Lambda_1}&\neq &0, \ \text{for} \  i \neq 1,\\
\label{inBC4}
x_{ 2\alpha_i}(-2)v_{\Lambda_1}&\neq&0, \ \text{if} \  \left< \alpha_i, \alpha_i\right>=1.
\end{eqnarray}
\end{lem}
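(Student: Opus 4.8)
The plan is to prove each of the four relations by identifying the relevant $\mathfrak{sl}_2$-subalgebra and using the structure of the level-one standard modules for $A_1^{(1)}$, together with explicit information about the weights of $v_{\Lambda_1}$.

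First I would treat \eqref{inBC1}. For $B_l^{(1)}$ and $C_l^{(1)}$, the simple root $\alpha_1$ is a long root in our normalization, so $\langle\alpha_1,\alpha_1\rangle=2$ and $\nu_1=1$; the restriction of $L(\Lambda_1)$ to $\widetilde{\mathfrak{sl}}_2(\alpha_1)$ has level $k_{\alpha_1}=k=1$. The key point is that $\langle\Lambda_1,\alpha_1^{\vee}\rangle=0$, i.e. $\alpha_1$ is orthogonal to $\lambda_1$ (which is immediate from the explicit formulas: for $B_l$, $\lambda_1=\epsilon_1$ and $\alpha_1=\epsilon_1-\epsilon_2$ are not orthogonal — so I must be careful here and instead compute $\langle\Lambda_1,\alpha_1\rangle$ directly). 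Using $x_{\alpha_1}(-1)v_{\Lambda_1}$ has weight $\Lambda_1+\alpha_1-\delta$; I would show this vector is annihilated by all $x_{-\alpha_i}(0)$ and $x_{\alpha_i}(1)$ appropriately, or more simply, invoke the standard $A_1^{(1)}$ fact that on the level-one module $L(\Lambda_0)$ (the basic module, to which $L(\Lambda_1)$ restricts as a sum of $\widetilde{\mathfrak{sl}}_2(\alpha_1)$-modules) one has $x_\theta(-1)v=0$ for the appropriate vacuum-type vector, combined with a weight/degree count. Concretely, $x_{\alpha_1}(-1)v_{\Lambda_1}$ has $\mathfrak{sl}_2(\alpha_1)$-weight exceeding what can appear at that energy level in the level-one module, forcing it to vanish; this is exactly relation \eqref{rel1} with $k_{\alpha_1}=1$ shifted by the structure of $L(\Lambda_1)$.

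Next, for \eqref{inBC2}, \eqref{inBC3}, \eqref{inBC4} I would argue nonvanishing by exhibiting a vector on which the given operator acts nontrivially, or by a weight argument showing the target weight space $L(\Lambda_1)_{\mu}$ is nonzero and the operator is injective there. For \eqref{inBC3}: for $i\neq 1$, the vector $x_{\alpha_i}(-1)v_{\Lambda_1}$ has weight $\Lambda_1+\alpha_i-\delta$, and one checks $\langle\Lambda_1,\alpha_i^{\vee}\rangle\geq 1$ (since $\lambda_1$ pairs nontrivially with the appropriate coroot), so by $\mathfrak{sl}_2(\alpha_i)$-representation theory $x_{\alpha_i}(-1)v_{\Lambda_1}\neq 0$ — indeed $x_{\alpha_i}(-1)$ lowers degree but the $\mathfrak{sl}_2(\alpha_i)$-weight increases, and integrability plus the nonzero pairing guarantee this is a highest-weight-type raising that does not kill the vector. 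For \eqref{inBC2}, the same principle: $x_{\alpha_1}(-2)v_{\Lambda_1}$ sits one energy level deeper, where the level-one $A_1^{(1)}$-module does have a vector of that $\mathfrak{sl}_2(\alpha_1)$-weight, and I would pin it down by computing $\langle x_{\alpha_1}(-2)v_{\Lambda_1},\,x_{-\alpha_1}(2)v_{\Lambda_1}\rangle$ using the contravariant form and the commutation relations, getting a nonzero scalar. For \eqref{inBC4}, when $\langle\alpha_i,\alpha_i\rangle=1$ (short roots, which occur for $i=l$ in $B_l^{(1)}$ and $i=1$ in $C_l^{(1)}$), we have $\nu_i=2$ and $k_{\alpha_i}=2k=2$, so charge $2$ is allowed; I would show $x_{2\alpha_i}(-2)v_{\Lambda_1}\neq 0$ by the analogous contravariant-form computation on $\widetilde{\mathfrak{sl}}_2(\alpha_i)$, a level-two $A_1^{(1)}$-module, where $x_{2\alpha_i}(-2)$ corresponds to a nonzero singular-vector-type element at that energy.

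The main obstacle I expect is the bookkeeping of normalizations: the non-simply-laced setup means $\langle\alpha_i,\alpha_i\rangle$ is $2$ or $1$ depending on $i$ and on type, the $\epsilon$-coordinates of $\alpha_i$ and $\lambda_1$ are scaled by $\sqrt{2}$ in type $C_l$, and one must consistently track $\nu_j$, $k_{\alpha_j}$, and the pairing $\langle\Lambda_1,\alpha_i\rangle$ through all of this. The representation-theoretic content is standard ($\mathfrak{sl}_2$ and level-one/level-two $A_1^{(1)}$ facts, contravariant form nondegeneracy on standard modules), so the real work is verifying that the claimed (non)vanishing is consistent across types $B_l^{(1)}$ and $C_l^{(1)}$ with the conventions fixed in Section~\ref{prelim}, and that \eqref{inBC1} genuinely uses the level-one restriction (it would fail at higher level).
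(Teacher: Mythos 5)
Your overall strategy --- restrict to $\widetilde{\mathfrak{sl}}_2(\alpha_i)$ and read off the (non)vanishing from the structure of the resulting level-one or level-two standard $A_1^{(1)}$-module --- is exactly the paper's, but the weight computations that actually drive the dichotomy are wrong in your write-up, and they are the whole content of the lemma. The correct facts are: $\langle\lambda_1,\alpha_1^\vee\rangle=1$ (in both $B_l$ and $C_l$, where $\alpha_1$ is long), while $\langle\lambda_1,\alpha_i^\vee\rangle=0$ for $i\neq 1$, since $\lambda_1$ is a fundamental weight. You assert the opposite in both places: for \eqref{inBC1} you start from ``$\langle\Lambda_1,\alpha_1^\vee\rangle=0$'', notice it cannot be right, but never resolve it; for \eqref{inBC3} you claim ``$\langle\Lambda_1,\alpha_i^\vee\rangle\geq 1$'' for $i\neq1$ and make that nonzero pairing the reason for nonvanishing. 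The mechanism is in fact the reverse of what you describe: writing $x_{\alpha_i}(-1)v_{\Lambda_1}=f_0 v$ for the affine lowering operator of $\widetilde{\mathfrak{sl}}_2(\alpha_i)$, this vector vanishes precisely when $\langle\Lambda_1,\alpha_0'^\vee\rangle=k_{\alpha_i}-\langle\Lambda_1,\alpha_i^\vee\rangle=0$, which happens for $i=1$ (level $1$, pairing $1$: the restriction is the $A_1^{(1)}$-module $L(\Lambda_1)$) and fails for $i\neq 1$ (pairing $0$: the restriction is the vacuum module $L(k_{\alpha_i}\Lambda_0)$, on which $x_{\alpha_i}(-1)v\neq0$). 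Relatedly, your appeal to ``$x_\theta(-1)v=0$ on the level-one module $L(\Lambda_0)$'' is false as stated --- on the basic $A_1^{(1)}$-module one has $x_\alpha(-1)v_{\Lambda_0}\neq0$ and only $x_\alpha(-1)^2v_{\Lambda_0}=0$; the vanishing $x_\alpha(-1)v=0$ is a feature of $L(\Lambda_1)$, which is why identifying the restriction as $L(\Lambda_1)$ rather than $L(\Lambda_0)$ is the essential step.

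A second concrete error: in the paper's labelling of type $C_l$, $\alpha_1=\sqrt{2}\,\epsilon_l$ is the \emph{long} root and the short roots are $\alpha_2,\dots,\alpha_l$, so \eqref{inBC4} concerns $i=l$ in $B_l^{(1)}$ and $i=2,\dots,l$ in $C_l^{(1)}$, not ``$i=1$ in $C_l^{(1)}$'' as you write; if $\alpha_1$ were short, \eqref{inBC4} would contradict \eqref{inBC1}, since $x_{2\alpha_1}(-2)v_{\Lambda_1}=x_{\alpha_1}(-1)^2v_{\Lambda_1}$. Your contravariant-form computations for \eqref{inBC2} and \eqref{inBC4} are a legitimate (slightly more computational) alternative to the paper's identification of the restricted modules as $L(\Lambda_1)$ and $L(2\Lambda_0)$ respectively, and they do go through once the pairings above are corrected; but as written the proposal proves the vanishing and nonvanishing statements for the wrong indices, so it does not yet constitute a proof.
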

\begin{proof}
Let us first assume that $ \widetilde{\mathfrak{g}}$ is of type $B_l^{(1)}$. Since the restriction of $L(\Lambda_1)$ to $\widetilde{\mathfrak{sl}}_2(\alpha_1)$ is a level one module and since we have $\left< \lambda_1, \alpha_1\right>=1$, it follows that $\widetilde{\mathfrak{sl}}_2(\alpha_1)v_{\Lambda_1}$ is a standard $A^{(1)}_1$-module $L(\Lambda_1)$. This gives us 
\begin{eqnarray}\nonumber
x_{ \alpha_1}(-1)v_{\Lambda_1}&=&0\\
\nonumber
x_{ \alpha_1}(-2)v_{\Lambda_1}&\neq&0.\end{eqnarray}
The restriction of $L(\Lambda_1)$ to $\widetilde{\mathfrak{sl}}_2(\alpha_i)$, where $i \neq 1$, is a level one module with trivial $\mathfrak{sl}_2(\alpha_i)$-module on the top, and therefore it is a standard $A^{(1)}_1$-module $L(\Lambda_0)$. From this follows (\ref{inBC3}). Relation (\ref{inBC4}) follows from the fact that the restriction of $L(\Lambda_1)$ to $\widetilde{\mathfrak{sl}}_2(\alpha_l)$ is a level two module with trivial $\mathfrak{sl}_2(\alpha_i)$-module on the top, and therefore it is a standard $A^{(1)}_1$-module $L(2\Lambda_0)$ (cf. \cite{K}). 

In a similar way it can be verified that the claims of the lemma hold for the case of a $C_l^{(1)}$-module $L(\Lambda_1)$.
\end{proof}

\begin{lem}\label{pocuv2}
 In the case of affine Lie algebras $ \widetilde{\mathfrak{g}}$ of type $B_l^{(1)}$,  $C_l^{(1)}$, $F_4^{(1)}$,  $G_2^{(1)}$ on $L(\Lambda_j)$, where $j \neq 1$, we have
\begin{eqnarray}\label{inBCFG5}
x_{ \alpha_i}(-1)v_{\Lambda_j}&\neq &0, \ \text{for} \ 1 \leq  i \leq l,\\
\label{inBCFG6}
x_{ 2\alpha_i}(-2)v_{\Lambda_j}&\neq&0, \ \text{for} \ i \neq j \ \text{and} \ \left< \alpha_i, \alpha_i\right>=1,\\
\label{inBCFG7}
x_{ 2\alpha_j}(-2)v_{\Lambda_j}&= &0, \ \text{for} \ \left< \alpha_j, \alpha_j\right>=1,\\
\label{inBCFG8}
x_{ 2\alpha_j}(-3)v_{\Lambda_j}&\neq&0, \ \text{for} \  \left< \alpha_j, \alpha_j\right>=1,\\
\label{inBCFG9nak} 
x_{ 2\alpha_j}(-2)v_{\Lambda_j}&\neq &0, \ \text{for} \ \left< \alpha_j, \alpha_j\right>=\frac{2}{3},\\
\label{inBCFG9} 
x_{ 3\alpha_j}(-3)v_{\Lambda_j}&= &0, \ \text{for} \ \left< \alpha_j, \alpha_j\right>=\frac{2}{3},\\
\label{inBCFG10}
x_{ 3\alpha_j}(-4)v_{\Lambda_j}&\neq &0, \ \text{for} \ \left< \alpha_j, \alpha_j\right>=\frac{2}{3}.\end{eqnarray}
\end{lem}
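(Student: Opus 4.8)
The plan is to reduce everything, exactly as in the proof of Lemma \ref{pocuv1}, to the representation theory of the $A_1^{(1)}$-subalgebras $\widetilde{\mathfrak{sl}}_2(\alpha_i)$ and to the known structure of their level one, two and three standard modules. For each simple root $\alpha_i$ we first compute $k_{\alpha_i}=2k/\langle\alpha_i,\alpha_i\rangle$ with $k=1$ and the value $\langle \Lambda_j,\alpha_i\rangle$ (equivalently $\langle\lambda_j,\alpha_i^\vee\rangle$ using the data listed in Section \ref{prelim}); this determines which standard $A_1^{(1)}$-module $\widetilde{\mathfrak{sl}}_2(\alpha_i)v_{\Lambda_j}$ is, and with which highest weight vector inside it.

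First I would handle the generic relations \eqref{inBCFG5} and \eqref{inBCFG6}. For \eqref{inBCFG5}: if $\langle\Lambda_j,\alpha_i\rangle\ge 1$ the vector $v_{\Lambda_j}$ is a highest weight vector for $\widetilde{\mathfrak{sl}}_2(\alpha_i)$ of positive $\alpha_i$-weight, so the affine $\mathfrak{sl}_2$-module it generates is $L(\ell\Lambda_0+m\Lambda_1)$ with $m=\langle\Lambda_j,\alpha_i\rangle\ge 1$ and then $x_{\alpha_i}(-1)v_{\Lambda_j}\neq 0$ because it is a nonzero weight vector of the top of that module (it is $e_{\alpha_i}$ applied to the highest weight vector, which is nonzero precisely when $m\ge 1$); if instead $\langle\Lambda_j,\alpha_i\rangle=0$ the restricted module is $L(\ell\Lambda_0)$ for the appropriate $\ell\in\{1,2,3\}$, and $x_{\alpha_i}(-1)v_{\Lambda_j}\neq 0$ by the standard fact (used already in Lemma \ref{pocuv1}, cf. \cite{LP,MP,K}) that in a level $\ge 1$ standard $A_1^{(1)}$-module the vector $x_\alpha(-1)$ applied to the vacuum-type top vector is nonzero. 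Relation \eqref{inBCFG6} is the same computation applied to the colour $i$ with $\langle\alpha_i,\alpha_i\rangle=1$, $i\neq j$: there $k_{\alpha_i}=2$, the restricted module is $L(2\Lambda_0)$ (trivial top), and $x_{2\alpha_i}(-2)v_{\Lambda_j}\neq 0$ is exactly the nonvanishing statement \eqref{inBC4} for level two modules.

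Next I would do the more delicate short- and middle-root relations \eqref{inBCFG7}--\eqref{inBCFG10}, which are where the ``$= 0$'' statements live. For the case $\langle\alpha_j,\alpha_j\rangle=1$ (so $j$ is the short end of $B_l^{(1)}$ or of $F_4^{(1)}$): here $k_{\alpha_j}=2$ and $\langle\Lambda_j,\alpha_j\rangle=1$, so the restriction is the standard $A_1^{(1)}$-module $L(\Lambda_0+\Lambda_1)$ with $v_{\Lambda_j}$ sitting as its highest weight vector of $\alpha_j$-weight $1$. In that module the Frenkel--Kac/principal-subspace relations (equivalently the defining relations of the level two $\widehat{\mathfrak{sl}_2}$-module, cf. \cite{LP,MP,FS}) give $x_{2\alpha_j}(z)^2=0$ on the whole module and, more precisely, the lowest nonvanishing coefficient of $x_{2\alpha_j}(z)$ on $v_{\Lambda_j}$ is the one of degree $3$, not $2$; this yields $x_{2\alpha_j}(-2)v_{\Lambda_j}=0$ (which is \eqref{inBCFG7}) and $x_{2\alpha_j}(-3)v_{\Lambda_j}\neq 0$ (which is \eqref{inBCFG8}). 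For $\langle\alpha_j,\alpha_j\rangle=\tfrac23$ (the short root of $G_2^{(1)}$, i.e. $j=2$): then $\nu_j=3$, $k_{\alpha_j}=3$, the restriction of $L(\Lambda_2)$ to $\widetilde{\mathfrak{sl}}_2(\alpha_2)$ is a level three standard module, and $v_{\Lambda_2}$ is its highest weight vector of $\alpha_2$-weight $\langle\Lambda_2,\alpha_2^\vee\rangle=1$, i.e. the module $L(2\Lambda_0+\Lambda_1)$. In this level three module the charge three current satisfies $x_{3\alpha_j}(z)^? $-type vanishing coming from $x_{(k_{\alpha_j}+1)\alpha_j}(z)=0$ only at charge $4$, but the \emph{initial} condition on the highest weight vector is stronger: one gets $x_{3\alpha_j}(-3)v_{\Lambda_2}=0$ (that is \eqref{inBCFG9}) while $x_{3\alpha_j}(-4)v_{\Lambda_2}\neq 0$ (that is \eqref{inBCFG10}), and $x_{2\alpha_j}(-2)v_{\Lambda_2}\neq 0$ (that is \eqref{inBCFG9nak}), all of which are read off from the degrees of the leading terms of the vertex operators $x_{r\alpha_2}(z)v_{\Lambda_2}$ in the standard $A_1^{(1)}$-module $L(2\Lambda_0+\Lambda_1)$; these leading-term degrees are governed by the weight of $v_{\Lambda_2}$ and by $\langle 2\alpha_2,2\alpha_2\rangle$, $\langle 3\alpha_2,3\alpha_2\rangle$ together with the grading of $L(2\Lambda_0+\Lambda_1)$, exactly as in the $A_1^{(1)}$ computations of \cite{G1,Bu1}.

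The main obstacle, as in Lemma \ref{pocuv1}, is not conceptual but bookkeeping: one must correctly identify, for every relevant pair $(\widetilde{\mathfrak{g}},j)$ among $B_l^{(1)},C_l^{(1)},F_4^{(1)},G_2^{(1)}$, the isomorphism type of each restricted $\widetilde{\mathfrak{sl}}_2(\alpha_i)$-module and the position of $v_{\Lambda_j}$ inside it, and then invoke the right vanishing/nonvanishing statement for level one, two or three standard $A_1^{(1)}$-modules. The genuinely new point compared with the vacuum case is the ``$= 0$'' relations \eqref{inBCFG7} and \eqref{inBCFG9}: these do not follow from the charge bound $x_{(k_{\alpha_j}+1)\alpha_j}(z)=0$ alone but from the finer fact that on a highest weight vector of nonzero $\alpha_j$-weight inside a level $k_{\alpha_j}$ module the vertex operator $x_{k_{\alpha_j}\alpha_j}(z)$ has its leading coefficient shifted up by one in degree; I would state this as a short auxiliary observation about standard $A_1^{(1)}$-modules $L((\ell-1)\Lambda_0+\Lambda_1)$ and then apply it uniformly. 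Once these $A_1^{(1)}$ facts are in place, each of \eqref{inBCFG5}--\eqref{inBCFG10} follows by the same one-line restriction argument used for \eqref{inBC1}--\eqref{inBC4}, and the remaining cases are ``verified in a similar way'' precisely as in the proof of Lemma \ref{pocuv1}.
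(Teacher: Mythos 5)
Your proposal follows essentially the same route as the paper: restrict $L(\Lambda_j)$ to each subalgebra $\widetilde{\mathfrak{sl}}_2(\alpha_i)$, identify the resulting standard $A_1^{(1)}$-module of level $k_{\alpha_i}$ from the value of $\langle\lambda_j,\alpha_i^\vee\rangle$ (namely $L(\Lambda_0)$, $L(2\Lambda_0)$, $L(\Lambda_0+\Lambda_1)$ or $L(2\Lambda_0+\Lambda_1)$ as appropriate), and read off the vanishing and nonvanishing of the relevant coefficients from the known initial conditions on highest weight vectors of those modules. Your explicit remark that the ``$=0$'' relations \eqref{inBCFG7} and \eqref{inBCFG9} come from the finer degree shift of the leading coefficient of $x_{k_{\alpha_j}\alpha_j}(z)$ on a highest weight vector of nonzero $\alpha_j$-weight, rather than from the charge bound alone, is exactly the content the paper leaves implicit in its appeal to the structure of $L(\Lambda_0+\Lambda_1)$ and $L(2\Lambda_0+\Lambda_1)$.
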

\begin{proof}
Let $ \widetilde{\mathfrak{g}}$ be of type $B_l^{(1)}$. The restriction of $L(\Lambda_l)$ to $\widetilde{\mathfrak{sl}}_2(\alpha_i)$, where $i \neq l$, is a standard $A^{(1)}_1$-module $L(\Lambda_0)$. Therefore, 
\beq\nonumber
x_{ \alpha_i}(-1)v_{\Lambda_l} \neq  0.\eeq
On the other hand the restriction of $L(\Lambda_l)$ to $\widetilde{\mathfrak{sl}}_2(\alpha_l)$ is a level two module with two dimensional $\mathfrak{sl}_2(\alpha_l)$-module on the top, so it must be a standard $A^{(1)}_1$-module $L(\Lambda_0+\Lambda_1)$. From this follows \begin{eqnarray}\nonumber
x_{ \alpha_l}(-1)v_{\Lambda_l}&\neq &0, \\
\nonumber
x_{ 2\alpha_l}(-2)v_{\Lambda_l}&=&0,\\
\nonumber
x_{ 2\alpha_l}(-3)v_{\Lambda_l}&\neq &0.
\end{eqnarray}
In a similar way it can be verified that relations (\ref{inBCFG5}), (\ref{inBCFG7}) and (\ref{inBCFG8}) hold for the case of a $C_l^{(1)}$-module $L(\Lambda_j)$, where $j=2, \ldots ,l$ and $F_4^{(1)}$-module $L(\Lambda_4)$. In the case of $C_l^{(1)}$-module $L(\Lambda_j)$ and $F_4^{(1)}$-module $L(\Lambda_4)$ we also have relation (\ref{inBCFG6}), which follows from the fact that the restriction of $L(\Lambda_j)$ to $\widetilde{\mathfrak{sl}}_2(\alpha_i)$, where $i \neq j, 1$ (or $i = 3$ in the case of $F_4^{(1)}$) is a level two module with trivial $\mathfrak{sl}_2(\alpha_i)$-module on the top, and therefore it must be a standard $A^{(1)}_1$-module $L(2\Lambda_0)$. When $ \widetilde{\mathfrak{g}}$ is of type $G_2^{(1)}$ we have relations (\ref{inBCFG5}), \eqref{inBCFG9nak},  (\ref{inBCFG9}) and (\ref{inBCFG10}) on $L(\Lambda_2)$. These relations are a consequence of the fact that the restriction of $L(\Lambda_2)$ to $\widetilde{\mathfrak{sl}}_2(\alpha_1)$ is a level one standard $L(\Lambda_0)$-module and the restriction of $L(\Lambda_2)$ to $\widetilde{\mathfrak{sl}}_2(\alpha_1)$ is $A^{(1)}_1$-module $L(2\Lambda_0+\Lambda_1)$. 
\end{proof}
From the last two lemmas we have
\beq\label{rel3}
x_{r\alpha_i}(z) v_{\Lambda_j}^{\otimes k_j} \otimes  v_{\Lambda_0}^{\otimes k_0} \in z^{\sum_{t=1}^{r}\delta_{i, j_t}}W_{L(\Lambda)}[[z]].
\eeq

Our quasi-particle basis monomial will be of the form
\beq\label{briefly}
b=b_{\alpha_l}\,\cdots \,b_{\alpha_2}b_{\alpha_1}, 
\eeq
where 
\beq\nonumber
b_{\alpha_i}=x_{n_{r_{i}^{(1)},i}\alpha_{i}}(m_{r_{i}^{(1)},i})\ldots x_{n_{1,i}\alpha_{i}}(m_{1,i}),\eeq
\beq\nonumber
n_{r_{i}^{(1)},i} \leq\cdots \leq n_{1,i}\leq k_{\alpha_i} \quad\text{and}\quad  r_{i}^{(1)} \geq r_{i}^{(2)}\geq \ldots \geq r_{i}^{(k_{\alpha_i})}\text{ for } i=1,\ldots ,l.
\eeq 
Here $n_{p,i}$ and $r_i^{(t)} $ represent parts of a conjugate pair of partitons $\Cc_i=(n_{r_{i}^{(1)},i},\ldots , n_{1,i})$ and $\Dc_i=(r_{i}^{(1)}, r_{i}^{(2)}, \ldots , r_{i}^{(s)})$ of some fixed $n_i$. Following \cite{G1} we call $\Cc_i$ a charge-type of a monomial $b_{\alpha_i}$, $\Dc_i$ a dual-charge-type and $n_i$ a color-type of a monomial $b_{\alpha_i}$. We can visualize charge-type and dual-charge type of monomial $b_{\alpha_i}$ using graphic presentation, as in the following example.

\begin{example}\label{ex1}
For monomial $$x_{\alpha_{i}}(m_{4,i})x_{2\alpha_{i}}(m_{3,i})x_{4\alpha_{i}}(m_{2,i})x_{4\alpha_{i}}(m_{1,i})$$
of color-type $n_i=11$, of charge-type $\Cc_i=(1,2,4,4)$ and dual-charge-type $\Dc_i=(4,3,2,2)$
 we have the graphic presentation as given in Figure \ref{figure1}, where each quasi-particle of charge $r$ is presented by a column of height $r$. Number of boxes in every row represents part of a dual-charge-type $\Dc_i$.

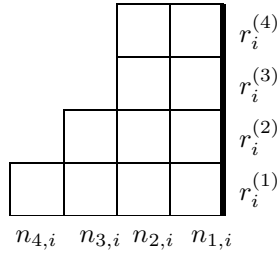
\begin{figure}[h!tb]
\setlength{\unitlength}{7mm}
\begin{picture}(5,4)
\linethickness{0.1mm}
\multiput(0,0)(1,0){1}%
{\line(0,1){1}}
\multiput(0,0)(1,0){4}%
{\line(1,0){1}}
\multiput(0,1)(1,0){4}%
{\line(1,0){1}}
\multiput(1,2)(1,0){3}%
{\line(1,0){1}}
\multiput(1,0)(1,0){2}%
{\line(0,1){2}}
\multiput(3,3)(1,0){1}%
{\line(1,0){1}}
\multiput(2,2)(1,0){2}%
{\line(0,1){2}}
\multiput(2,3)(1,0){1}%
{\line(1,0){1}}
\multiput(2,4)(1,0){1}%
{\line(1,0){1}}
\multiput(3,0)(1,0){2}%
{\line(0,1){4}}
\multiput(3,4)(1,0){1}%
{\line(1,0){1}}
\linethickness{0.75mm}
\multiput(4,0)(1,0){1}%
{\line(0,1){4}}
\put(4.3,0.3){\footnotesize{$r_i^{(1)}$}}
\put(4.3,1.3){\footnotesize{$r_i^{(2)}$}}
\put(4.3,2.3){\footnotesize{$r_i^{(3)}$}}
\put(4.3,3.3){\footnotesize{$r_i^{(4)}$}}
\put(3.4, -0.5){\footnotesize{$n_{1,i}$}}
\put(2.3,-0.5){\footnotesize{$n_{2,i}$}}
\put(1.3, -0.5){\footnotesize{$n_{3,i}$}}
\put(0.1, -0.5){\footnotesize{$n_{4,i}$}}
\end{picture}
\caption{Graphic presentation}
\label{figure1}
\end{figure}
\end{example}

Analogously to the situation with a single color, we define the {\em charge-type} $\Cc$, the {\em dual-charge-type} $\Dc$ of $b$ in (\ref{briefly}) by
\begin{align}
&\Cc=\left( \Cc_l; \,
\ldots ;\,
 \Cc_1\right),\label{charge-type}\\
&\Dc=\left(\Dc_l; \,
\ldots ;\,
 \Dc_1\right)\label{dual-charge-type},
\end{align}
and the {\em color-type} of $b$ as the l-tuple $(n_{l},\ldots,n_{1})$ where $n_i$ denotes the color-type of a monomial $b_{\alpha_i}$. Moreover, by
$$\Ec=\left( m_{r_{l}^{(1)},l},\ldots , m_{1,l}; \,
\ldots ;\,
 m_{r_{1}^{(1)},1},\ldots  , m_{1,1}\right)$$ 
we denote the {\em energy-type} of $b$.

Now, let $b, \overline{b}$ be any two quasi-particle monomials of the same color-type, expressed as in (\ref{briefly}). Denote their charge-types and energy-types by $\Cc,\overline{\Cc}$  and  $\Ec,\overline{\Ec}$ respectively. We define the linear order among quasi-particle monomials of the same color-type by 
\beq\label{order1}
b< \overline{b}\qquad \text{if}\qquad \Cc<\overline{\Cc}\quad\text{or}\quad \Cc=\overline{\Cc} \text{ and } \Ec <\overline{\Ec},
\eeq
 where for (finite) sequences of integers we define:
$$(x_p,\ldots ,x_1)< (y_r,\ldots ,y_1)$$
if there exists $s$ such that 
\beq\label{order2}x_1=y_1,\,\ldots,\,x_{s-1}=y_{s-1}\ \text{and} \ s=p+1\leqslant r\quad \text{or}\quad x_s<y_s.
\eeq

From (\ref{rel2}) and (\ref{rel3}) follows that energies in the expression obtained by applying (\ref{briefly}) on the highest weight vector comply the following difference condition
\beq\label{rel4}
m_{p,i} \leq -n_{p,i} -\sum_{t=1}^{n_{p,i}}\delta_{i, j_t}, \ \text{for} \ 1\leq p \leq r_i^{(1)}.
\eeq
We strengthen this inequality by using relations among quasi-particles. 

The interactions among quasi-particles of different colors \cite[Lemma 2.3.2]{Bu1}, \cite[Lemma 4.3, Lemma 5.3]{Bu2}, \cite[Lemma 3.2]{Bu3}, \cite[Lemma 5.3]{BK} we summarize as follows. 
\begin{lem}\label{rel5}
For quasi-particles of fixed charges $n_{i-1}$ and $n_i $ on $W_{L(\Lambda)}$ we have
\begin{align}
(z_{1}-z_{2})^{ M_{i}}x_{n_{i}\alpha_{i}}(z_{2})
x_{n_{i-1}\alpha_{i-1}}(z_{1})
=(z_{1}-z_{2})^{M_{i} }
x_{n_{i-1}\alpha_{i-1}}(z_{1})x_{n_{i}\alpha_{i}}(z_{2}), 
\end{align}
where
$
M_{i}=\min \textstyle\left\{\textstyle\frac{\nu_{\alpha_i}}{\nu_{\alpha_{i-1}} }n_{i-1},n_i\right\}$.
\end{lem}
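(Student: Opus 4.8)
\emph{Plan of proof.} The identity is an instance of weak commutativity in the vertex operator algebra $L(k\Lambda_0)$: the operators $x_{n_i\alpha_i}(z)=Y(x_{\alpha_i}(-1)^{n_i}v_{k\Lambda_0},z)$ and $x_{n_{i-1}\alpha_{i-1}}(z)=Y(x_{\alpha_{i-1}}(-1)^{n_{i-1}}v_{k\Lambda_0},z)$ are vertex operators of $L(k\Lambda_0)$, and $W_{L(\Lambda)}\subseteq L(\Lambda)$ is a module for $L(k\Lambda_0)$, so it suffices to prove the identity on $L(k\Lambda_0)$ itself. We may assume $n_i\leq k_{\alpha_i}$ and $n_{i-1}\leq k_{\alpha_{i-1}}$, as otherwise one of the operators vanishes by \eqref{rel1}. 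By the standard correspondence between weak commutativity and vanishing of modes (see e.g. \cite{LL}), the asserted relation is equivalent to
\beq\nonumber
x_{n_i\alpha_i}(m)\,x_{\alpha_{i-1}}(-1)^{n_{i-1}}v_{k\Lambda_0}=0 \qquad\text{for all } m\geq M_i-n_i+1,
\eeq
and this is what the plan is to prove.

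I would reduce to level one. Via the embedding of vertex operator algebras $L(k\Lambda_0)\hookrightarrow L(\Lambda_0)^{\otimes k}$, $v_{k\Lambda_0}\mapsto v_{\Lambda_0}^{\otimes k}$, the coproduct yields $x_{n\alpha}(z)=\sum_{r_1+\cdots+r_k=n}\binom{n}{r_1,\ldots,r_k}\,x_{r_1\alpha}(z)\otimes\cdots\otimes x_{r_k\alpha}(z)$, and only the terms with $0\leq r_a\leq\nu_\alpha$ contribute, since $x_{r\alpha}(z)=0$ on $L(\Lambda_0)$ for $r>\nu_\alpha$ by \eqref{rel1}. Thus $x_{n_i\alpha_i}(z_2)x_{n_{i-1}\alpha_{i-1}}(z_1)$ is a finite sum of tensor products whose $a$-th factor is $x_{r_a\alpha_i}(z_2)x_{s_a\alpha_{i-1}}(z_1)$, with $\sum_a r_a=n_i$, $\sum_a s_a=n_{i-1}$, $r_a\leq\nu_{\alpha_i}$, $s_a\leq\nu_{\alpha_{i-1}}$. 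Granting the level-one locality
\beq\nonumber
(z_1-z_2)^{M^{(1)}(r,s)}\,x_{r\alpha_i}(z_2)x_{s\alpha_{i-1}}(z_1)=(z_1-z_2)^{M^{(1)}(r,s)}\,x_{s\alpha_{i-1}}(z_1)x_{r\alpha_i}(z_2),\qquad M^{(1)}(r,s):=\min\Big\{\tfrac{\nu_{\alpha_i}}{\nu_{\alpha_{i-1}}}s,\,r\Big\},
\eeq
on $L(\Lambda_0)$ for $0\leq r\leq\nu_{\alpha_i}$, $0\leq s\leq\nu_{\alpha_{i-1}}$, multiplication by $(z_1-z_2)^{\sum_a M^{(1)}(r_a,s_a)}$ symmetrizes each tensor summand; and since $\tfrac{2\langle\alpha_i,\alpha_{i-1}\rangle}{\langle\alpha_{i-1},\alpha_{i-1}\rangle}=-1$ and $\tfrac{\nu_{\alpha_i}}{\nu_{\alpha_{i-1}}}\in\{1,2,3\}$ in all of our types, a short optimization over $(r_a),(s_a)$ — using $n_i\leq\nu_{\alpha_i}k$ and $n_{i-1}\leq\nu_{\alpha_{i-1}}k$ — gives $\max\sum_a M^{(1)}(r_a,s_a)=\min\{\tfrac{\nu_{\alpha_i}}{\nu_{\alpha_{i-1}}}n_{i-1},\,n_i\}=M_i$, proving the lemma at level $k$.

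The level-one statement is a finite computation, as $r\leq\nu_{\alpha_i}\leq 3$ and $s\leq\nu_{\alpha_{i-1}}\leq 2$. By the mode criterion it amounts to $x_{r\alpha_i}(m)\,x_{\alpha_{i-1}}(-1)^{s}v_{\Lambda_0}=0$ for $m$ above the corresponding threshold. I would move the $r$ Fourier modes of $x_{\alpha_i}$ one at a time to the right past the $s$ copies of $x_{\alpha_{i-1}}(-1)$, using $[x_\beta(a),x_\gamma(b)]=N_{\beta,\gamma}\,x_{\beta+\gamma}(a+b)$ (no central term appears, since throughout no occurring root is the negative of another): each move pushes a mode along a root string in the rank-two subsystem spanned by $\alpha_i,\alpha_{i-1}$ (of type $A_1\times A_1$, $A_2$, $B_2$, or $G_2$), and the process terminates because these strings are finite. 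One then invokes $x_\gamma(c)v_{\Lambda_0}=0$ for $\gamma\in R_{+}$, $c\geq 0$. The resulting threshold is controlled by the length $\tfrac{\nu_{\alpha_i}}{\nu_{\alpha_{i-1}}}+1$ of the $\alpha_i$-string through $\alpha_{i-1}$ — which bounds how many $x_{\alpha_i}$-modes can be absorbed into the root-string generators standing over a single $x_{\alpha_{i-1}}(-1)$ — and by the number $s$ of available $x_{\alpha_{i-1}}(-1)$'s; after the sum over tensor factors these produce the two entries of the minimum defining $M_i$.

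The hard part is the bookkeeping: one must track precisely how far each $x_{\alpha_i}$-mode can be lowered before it is forced onto $v_{\Lambda_0}$ with nonnegative index, exclude any cancellation that would spoil the threshold, and verify that the optimization over $(r_a),(s_a)$ in the second step is attained; the non-simply-laced root strings and the resulting case split are the only technically delicate point. (Alternatively one may argue directly at level $k$: the vector $x_{n_i\alpha_i}(m)\,x_{\alpha_{i-1}}(-1)^{n_{i-1}}v_{k\Lambda_0}$ has energy $n_{i-1}-m$ and $\mathfrak{h}$-weight $n_i\alpha_i+n_{i-1}\alpha_{i-1}$, and it vanishes once $n_{i-1}-m$ drops below the minimal energy $n_i+n_{i-1}-M_i$ of that weight space in the principal subspace; this is the route of \cite{Bu1,Bu2,Bu3,BK}.)
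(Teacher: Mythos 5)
The paper itself contains no proof of this lemma: it is stated as a summary of \cite[Lemma 2.3.2]{Bu1}, \cite[Lemmas 4.3, 5.3]{Bu2}, \cite[Lemma 3.2]{Bu3} and \cite[Lemma 5.3]{BK}, so your attempt can only be measured against the proofs in those references. Your architecture matches theirs: translating weak commutativity into the mode condition $x_{n_i\alpha_i}(m)\,x_{\alpha_{i-1}}(-1)^{n_{i-1}}v_{k\Lambda_0}=0$ for $m\geq M_i-n_i+1$, reducing to level one through $L(\Lambda_0)^{\otimes k}$ together with $x_{r\alpha}(z)=0$ for $r>\nu_\alpha$, and adding up level-one locality orders is exactly the Georgiev--Butorac route. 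One remark: the ``optimization'' you flag as delicate is not needed. Since the lemma does not assert that $M_i$ is minimal, it suffices to have the upper bound $\sum_a M^{(1)}(r_a,s_a)\leq\min\bigl\{\tfrac{\nu_{\alpha_i}}{\nu_{\alpha_{i-1}}}\sum_a s_a,\sum_a r_a\bigr\}=M_i$, which is just subadditivity of the minimum; attainment of the maximum is irrelevant.

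The genuine gap is in your primary argument for the level-one locality. The mode $x_{r\alpha_i}(m)$ is $\sum_{m_1+\cdots+m_r=m}x_{\alpha_i}(m_1)\cdots x_{\alpha_i}(m_r)$, and although only finitely many summands survive on $x_{\alpha_{i-1}}(-1)^{s}v_{\Lambda_0}$, those summands contain factors $x_{\alpha_i}(m_j)$ with $m_j<0$ (for instance $x_{\alpha_i}(2)x_{\alpha_i}(-1)x_{\alpha_{i-1}}(-1)^2v_{\Lambda_0}$ occurs in $x_{2\alpha_i}(1)x_{\alpha_{i-1}}(-1)^2v_{\Lambda_0}$). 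Pushing such a mode rightwards along root strings never lands it on $v_{\Lambda_0}$ with nonnegative index, so the individual terms do not vanish; the required vanishing is a cancellation in the sum, which the commutator bookkeeping you describe cannot detect. The argument you relegate to a parenthesis is the one that actually works, and it closes the gap with almost no case analysis: the vector $x_{r\alpha_i}(m)x_{\alpha_{i-1}}(-1)^{s}v_{\Lambda_0}$ lies in the homogeneous subspace of $L(\Lambda_0)$ of degree $s-m$ and $\mathfrak{h}$-weight $r\alpha_i+s\alpha_{i-1}$, and for $m\geq M^{(1)}(r,s)-r+1$ one has $s-m\leq r+s-M^{(1)}(r,s)-1\leq 1$ in every case occurring here. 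The degree-zero subspace has weight $0$, and the degree-one subspace is a quotient of $\mathfrak{g}\otimes t^{-1}v_{\Lambda_0}$, whose weights are roots or $0$; so one only has to check that $r\alpha_i+s\alpha_{i-1}$ is not a root in the cases with $r+s-M^{(1)}(r,s)=2$, namely $r,s\leq 2$ with both $\alpha_{i-1},\alpha_i$ short, where $2\alpha_i+\alpha_{i-1}$, $\alpha_i+2\alpha_{i-1}$ and $2\alpha_i+2\alpha_{i-1}$ are indeed not roots of $C_l$ or $F_4$. Promote that parenthetical to the body of the proof and drop the root-string mechanism, and the argument is complete.
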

The interaction among quasi-particles of the same color is described by the following assertion \cite[Lemma 3.3]{F}, \cite[Lemma 4.4]{JP}, \cite[(3.18)--(3.23)]{G1}.
\begin{lem}\label{rel6}
For fixed charges $n_1,n_2$ such that $n_2\leqslant n_1$ and fixed integer $M$ such that $m_1+m_2=M$ the monomials  
$$
x_{n_2\alpha_i}(m_2)x_{n_1\alpha_i}(m_1) ,\, x_{n_2\alpha_i}(m_2-1)x_{n_1\alpha_i}(m_1 +1) ,\,\ldots \,  ,
 x_{n_2\alpha_i}(m_2-2n_2+1)x_{n_1\alpha_i}(m_1+2n_2-1)
$$
of  operators on $W_{L(\Lambda)}$
can be expressed
as a linear combination of monomials
$$
x_{n_2\alpha_i}(j_2)x_{n_1\alpha_i}(j_1) \quad \text{such that} \quad j_2 \leqslant m_2- 2n_2,\quad  j_1\geqslant m_1+2n_2 \ \text{and} \ j_1 +j_2=M
$$
and monomials which contain a quasi-particle of color $i$ and charge $n_1+1$. 
Moreover, for $n_2 = n_1$ the monomials 
$$
x_{n_2 \alpha_i}(m_2)x_{n_2 \alpha_i}(m_1)\quad\text{with} \ \ m_1-2n_2< m_2 \leqslant   m_1
$$
can be expressed
as a linear combination of monomials
$$
x_{n_2\alpha_i}(j_2)x_{n_2\alpha_i}(j_1)  \quad\text{such that} \quad j_2\leqslant j_1-2n_2 \ \text{and} \  j_1 +j_2=M$$
and monomials which contain a quasi-particle of color $i$ and charge $n_2+1$.
\end{lem}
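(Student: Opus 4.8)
The plan is to reduce the statement to a computation inside the affine subalgebra $\widetilde{\mathfrak{sl}}_2(\alpha_i)$. All the operators occurring in the statement are polynomials in the modes $x_{\alpha_i}(m)$, $m\in\mathbb Z$, and these modes pairwise commute, since $[x_{\alpha_i},x_{\alpha_i}]=0$ and $\langle x_{\alpha_i},x_{\alpha_i}\rangle=0$; this commutativity is also what makes the product $x_{\alpha_i}(z)^r$ in (\ref{vopquasi}) meaningful without normal ordering. On $W_{L(\Lambda)}$ the only relation among these operators that I would invoke is (\ref{rel1}), $x_{(k_{\alpha_i}+1)\alpha_i}(z)=0$, which holds because $L(\Lambda)$ restricted to $\widetilde{\mathfrak{sl}}_2(\alpha_i)$ is an integrable module of level $k_{\alpha_i}$. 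Thus the assertion becomes a statement purely about the polynomial algebra $\mathbb C[x_{\alpha_i}(m):m\in\mathbb Z]$ modulo (\ref{rel1}), that is, exactly the $A_1^{(1)}$-computation of \cite[Lemma 3.3]{F}, \cite[Lemma 4.4]{JP} and \cite[(3.18)--(3.23)]{G1}, whose mechanism I now recall.

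The basic tool is the \emph{fusion identity} $x_{r\alpha_i}(z)\,x_{s\alpha_i}(z)=x_{\alpha_i}(z)^{r+s}=x_{(r+s)\alpha_i}(z)$, equivalently $\sum_{a+b=m}x_{r\alpha_i}(a)x_{s\alpha_i}(b)=x_{(r+s)\alpha_i}(m)$, together with $x_{r\alpha_i}(a)x_{s\alpha_i}(b)=x_{s\alpha_i}(b)x_{r\alpha_i}(a)$. Working in two formal variables, I would use the iterate (associativity) property of the vertex operator algebra $L(k_0\Lambda_0+\cdots)$ — equivalently, since these fields commute, the Taylor expansion $x_{n_1\alpha_i}(z_1)=\sum_{j\ge0}\tfrac{(z_1-z_2)^j}{j!}\partial_{z_2}^{j}x_{n_1\alpha_i}(z_2)$ — to write $x_{n_1\alpha_i}(z_1)\,x_{n_2\alpha_i}(z_2)$ as a sum over $j\ge0$ of $(z_1-z_2)^j$ times the vertex operator of a vector of weight $(n_1+n_2)\alpha_i$. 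Then, using the fusion identities and (\ref{rel1}) to discard charges exceeding $k_{\alpha_i}$, I would rewrite the first $2n_2$ ``diagonal'' orders as a combination of monomials carrying a quasi-particle of color $i$ and charge $n_1+1$ (here one uses the fusion split $n_1+n_2=(n_1+1)+(n_2-1)$) and the remaining orders as a combination of products $x_{n_2\alpha_i}(j_2)\,x_{n_1\alpha_i}(j_1)$ whose energies differ by at least $2n_2$. Extracting the coefficient of $z_1^{-m_1-n_1}z_2^{-m_2-n_2}$, for each fixed $M=m_1+m_2$, should then yield precisely the relations asserted for the window of $2n_2$ ``close'' monomials; the case $n_1=n_2$ is identical, with the strictly diagonal monomial $x_{n_2\alpha_i}(m)x_{n_2\alpha_i}(m)$ occupying one end of the window.

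The step I expect to be the main obstacle is the combinatorial bookkeeping in this expansion: one has to show that a block of width \emph{exactly} $2n_2$ can be traded against quasi-particles of charge $n_1+1$, which amounts to a careful count of which monomials in the commuting modes $x_{\alpha_i}(m)$, and with what multiplicities, occur in the ``diagonal'' part versus the ``separated'' part — this is the technical core of the lemmas cited above, and it is where the hypothesis $n_2\le n_1$ and the exact width are used. A secondary and routine point, needed when the lemma is later applied to trim the spanning set, is termination of the rewriting: a single application moves the charge-$n_2$ quasi-particle strictly down and the charge-$n_1$ quasi-particle strictly up in energy, and within a fixed weight space of $W_{L(\Lambda)}$ energies are bounded, so the process stops after finitely many steps; any monomial produced with a quasi-particle of charge $n_1+1$ has strictly larger charge-type and is handled by the ambient induction on charge-type.
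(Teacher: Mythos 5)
Your proposal is correct and matches the paper's treatment: the paper gives no proof of Lemma \ref{rel6}, citing exactly the results you invoke (\cite[Lemma 3.3]{F}, \cite[Lemma 4.4]{JP}, \cite[(3.18)--(3.23)]{G1}), and your reduction to the commuting modes of a single color together with the fusion/Taylor-expansion mechanism is precisely the argument of those references. You also correctly locate the one nontrivial point --- that the tradable block has width exactly $2n_2$ (the naive regrouping of underived factors only covers $n_2$ diagonal orders, and the hypothesis $n_2\leqslant n_1$ enters in extending this to $2n_2$) --- which both you and the paper defer to the cited lemmas; note only that \eqref{rel1} is not actually needed here, since the lemma retains rather than discards the charge-$(n_1+1)$ monomials.
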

Denote by $B_W$ the set of all quasi-particle monomials of the form as in \eqref{briefly} which satisfy the following difference conditions
\begin{align}\label{rel7}
m_{p,i} &\leq -n_{p,i} +\sum_{q=1}^{r_{i-1}^{(1)}}\min \textstyle\left\{\textstyle\frac{\nu_{\alpha_i}}{\nu_{\alpha_{i-1}} }n_{i-1},n_i\right\}-2(p-1)n_{p,i}-\sum_{t=1}^{n_{p,i}}\delta_{i, j_t}, \ \text{for} \ 1\leq p \leq r_i^{(1)},\\
\label{rel8}
m_{p+1,i}& \leq m_{p,i} -2n_{p,i}, \ \text{for} \ n_{p+1,i}=n_{p,i}, \ \ 1\leq p \leq r_i^{(1)}-1,
\end{align}
where $r_0^{(1)}=0$ and $j_t$ is as in \eqref{jt}. We have
\begin{thm}\label{t1}
The set $\mathcal{B}_W=\{bv_{\Lambda} : b \in B_W \}$ forms a basis of the principal space $W_{L(\Lambda)}$.
\end{thm}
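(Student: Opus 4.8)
The plan is to prove the two halves of the statement separately: that $\mathcal{B}_W$ spans $W_{L(\Lambda)}$, and that it is linearly independent.

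\textbf{Spanning.} I would start from the fact recorded after \eqref{rel2} (Lemma~3.1 of \cite{G1}) that $W_{L(\Lambda)}=U(\widetilde{\mathfrak{n}}_{+})v_{\Lambda}$ is already spanned by the vectors $bv_{\Lambda}$ with $b$ of the form \eqref{briefly} and non-increasing charges within each color, and that \eqref{rel1} lets us restrict to charges $\le k_{\alpha_i}$. Then I would run the reduction algorithm of \cite{G1,Bu1,Bu2,Bu3,BK}: fixing a color-type and a degree --- so only finitely many monomials occur --- one performs a double induction, on the charge-type $\Cc$ and then on the energy-type $\Ec$, with respect to the order \eqref{order1}--\eqref{order2}. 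Whenever a monomial violates \eqref{rel8}, or violates the single-color part of \eqref{rel7}, Lemma~\ref{rel6} rewrites it --- through the conjugate-partition bookkeeping responsible for the $-2(p-1)n_{p,i}$ term --- as a linear combination of monomials of strictly larger charge-type and of monomials of the same charge-type that are strictly smaller in the order; the cross-color contribution $\sum_{q=1}^{r_{i-1}^{(1)}}\min\{\tfrac{\nu_{\alpha_i}}{\nu_{\alpha_{i-1}}}n_{i-1},n_i\}$ to \eqref{rel7} is produced by moving colors past one another via Lemma~\ref{rel5}; and the extra shift $-\sum_{t=1}^{n_{p,i}}\delta_{i,j_t}$ is exactly \eqref{rel4}, i.e.\ it comes from \eqref{rel3} and hence from the initial conditions of Lemmas~\ref{pocuv1} and \ref{pocuv2} once everything is applied to $v_{\Lambda}=v_{\Lambda_j}^{\otimes k_j}\otimes v_{\Lambda_0}^{\otimes k_0}$. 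Since at each step either the charge-type strictly increases or the energy-type strictly decreases within a finite set, the process terminates with a combination of monomials in $B_W$.

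\textbf{Linear independence.} Here I would follow the scheme sketched in the Introduction. Assume $\sum_{a\in A}c_a b_a v_{\Lambda}=0$ is a nontrivial relation with the $b_a\in B_W$ distinct; projecting onto a weight space of $\widetilde{\mathfrak{g}}$ we may assume all $b_a$ share a common color-type $(n_l,\ldots,n_1)$, and we induct on the total charge $n_1+\cdots+n_l$, the base case being all charges $0$. Let $b_{a_0}$ be the $<$-minimal monomial occurring. The goal is to transform the relation into a new one, again among vectors in the span of $\mathcal{B}_W$, in which the total charge has strictly dropped while the coefficient of the image of $b_{a_0}$ is still nonzero; the inductive hypothesis then forces $c_{a_0}=0$, and iterating removes every coefficient. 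The charge-lowering is carried out color by color: using the embedding $L(\Lambda)\subset L(\Lambda_j)^{\otimes k_j}\otimes L(\Lambda_0)^{\otimes k_0}$ (for $k>1$; when $k=1$ the module $L(\Lambda)$ is itself fundamental and the argument is a direct specialisation) together with suitable coefficients of intertwining operators for the vertex operator algebra $L(\Lambda_0)$ from \cite{Li1,Li2,Li3}, one strips one quasi-particle off the factor $b_{a,\alpha_i}$ and transfers it to a level-one tensor factor; the Weyl-group translation operators among the level-one standard modules reindex the remaining energies so that the difference conditions \eqref{rel7}--\eqref{rel8}, with the shifts governed by \eqref{jt}, are preserved; and in types $B_l^{(1)}$ and $C_l^{(1)}$ the simple-current maps among the level-one standard modules are invoked to match up the two kinds of tensor factors. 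Compatibility of each of these operations with the order \eqref{order1}--\eqref{order2} is what guarantees that $b_{a_0}$ stays minimal --- hence that its coefficient can be isolated --- after every reduction.

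\textbf{Main obstacle.} The step I expect to be delicate is precisely this last bookkeeping: one must check that removing a quasi-particle and then applying the translation operators (and, for $B_l^{(1)}$ and $C_l^{(1)}$, a simple-current map) sends the entire family $\{b_a v_{\Lambda}\}_{a\in A}$ back into the span of $\mathcal{B}_W$ --- i.e.\ that the images again satisfy \eqref{rel7}--\eqref{rel8} --- and that the shift $-\sum_{t}\delta_{i,j_t}$ dictated by \eqref{jt} behaves correctly near the distinguished color $j$. This is where the rectangular weight $\Lambda=k_0\Lambda_0+k_j\Lambda_j$ genuinely differs from the vacuum case $L(k\Lambda_0)$ of \cite{Bu1,Bu2,Bu3,BK}, via the additional initial conditions of Lemmas~\ref{pocuv1} and \ref{pocuv2}, and it has to be done --- routinely but separately --- for each of the types $B_l^{(1)}$, $C_l^{(1)}$, $F_4^{(1)}$, $G_2^{(1)}$, since the root-length data $\nu_{\alpha_i}$ differ. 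Once linear independence is established, the character formula of Theorem~\ref{thm_karakter} follows by summing the contributions of $B_W$ graded by charge and energy.
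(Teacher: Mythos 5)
Your spanning argument coincides with the paper's (which simply cites the induction on charge-type and total energy from \cite{G1}), and your toolkit for linear independence --- minimal monomial with respect to \eqref{order1}--\eqref{order2}, coefficients of level-one intertwining operators, Weyl group translations, simple currents in types $B_l^{(1)}$ and $C_l^{(1)}$ --- is exactly the paper's. However, two load-bearing ingredients are missing or mis-organized. The first is the projection $\pi_{\Dc}$. At level $k>1$ the paper's very first move after fixing the minimal monomial $b_{a_0}$ is to project $W_{L(\Lambda)}$ onto the tensor product of $\mathfrak{h}$-weight subspaces of the level-one principal subspaces $W_{L(\Lambda_{j^t})}$ determined by the \emph{dual-charge-type} $\Dc$ of $b_{a_0}$. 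This one device does both jobs your sketch needs done: it annihilates every summand of strictly larger charge-type (so the surviving relation involves only monomials of the fixed charge-type and the coefficient of $b_{a_0}$ can eventually be isolated), and it distributes each quasi-particle over the level-one tensor factors so that the operators $A_{\lambda_1}$, $e_{\lambda_1}$, $e_{\theta}$, $e_{\alpha_1}$ --- defined only on level-one modules --- can act factor by factor. Your ``projecting onto a weight space of $\widetilde{\mathfrak{g}}$'' only fixes the color-type; it does not kill higher charge-types, and without $\pi_{\Dc}$ the phrase ``transfers it to a level-one tensor factor'' has no precise meaning.

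The second issue is the shape of the induction. You induct on the total charge $n_1+\cdots+n_l$ and propose to strip a quasi-particle off $b_{a,\alpha_i}$ for arbitrary $i$; but every operator the paper constructs ($A_{\lambda_1}$, $e_{\lambda_1}$, $e_{\alpha_1}$, and $e_{\theta}$ in types $F_4^{(1)}$ and $G_2^{(1)}$) is attached to color $1$ or to the highest root, and no analogue for color $i>1$ is set up in the ambient algebra. The paper's way around this is structural: it removes \emph{all} color-$1$ quasi-particles from the minimal summand, observes that the surviving relation \eqref{eq:d41} can be realized inside the principal subspace of a rectangular-weight module for a \emph{smaller} affine Lie algebra ($B_{l-1}^{(1)}$, $A_{l-1}^{(1)}$, $C_3^{(1)}$ or $A_1^{(1)}$ according to the type), and then inducts on the rank $l$ and on charge-type, invoking Georgiev's argument there. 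Your single induction on total charge within the fixed algebra would stall at color $2$ unless you either construct the color-$2$ analogues of these operators or make this change of algebra explicit. (Note also that in types $B_l^{(1)}$, $C_l^{(1)}$ with $j=1$ the paper needs no induction at all: $v_{\Lambda_1}=e_{\lambda_1}v_{\Lambda_0}$ converts the relation directly into one for $L(k\Lambda_0)$, where independence is already known.)
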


The proof that $\mathcal{B}_W$ is the spanning set goes as in \cite{G1}, by using induction on the charge-type and the total energy of quasi-particle monomials. It remains to prove the linear independence of the spanning set.

\section{Proof of linear independence}
In the proof of linear independence of the set $\mathcal{B}_W$ we will employ operators defined on level one standard modules $L(\Lambda_j)$. The projection $\pi_{\Dc}$, which generalizes the projection introduced in \cite{G1} (see also \cite{Bu1, Bu2, Bu3, BK}), enables us to use these operators in the case of higher levels. In Section \ref{ss3.1} we will recall the main properties of $\pi_{\Dc}$. In Section \ref{ss3.2} we will introduce coefficients of intertwining operators among level one modules and in Section \ref{ss3.3} we will recall important properties of Weyl group translation operators. Finally in Section \ref{ss3.4} we prove linear independence of the set $\mathcal{B}_W$.

\subsection{Projection $\pi_{\Dc}$}\label{ss3.1}
For a dual-charge-type $\Dc$ of monomial \eqref{briefly} denote by $\pi_{\Dc}$ the projection of $W_{L(\Lambda)}$ on the vector space
$$ {W_{L(\Lambda_{j^k})}}_{(\mu^{(k)}_{l};\ldots;\mu_{1}^{(k)})}\otimes \cdots \otimes  {W_{L(\Lambda_{j^1})}}_{(\mu^{(1)}_{l};\ldots;\mu_{1}^{(1)})}\subset   W_{ L(\Lambda_j)}^{\otimes k_j}\otimes W_{L(\Lambda_0)}^{\otimes k_0} \subset   L(\Lambda_j)^{\otimes k_j} \otimes L(\Lambda_0)^{\otimes k_0},
$$
where $j^t \in \{0,j\}$, $1 \leq t \leq k$, ${W_{L(\Lambda_{j^t})}}_{(\mu^{(t)}_{l};\ldots;\mu_{1}^{(t)})}$
denotes the $\mathfrak{h}$-weight subspace of the level one principal subspace $W_{ L(\Lambda_{j^t})}$ of weight $\mu^{(t)}_{l}\alpha_l+ \cdots+ \mu_{1}^{(t)}\alpha_1 \in Q$ with
\beq\label{proj0}
\mu^{(t)}_{i}=\sum_{p=0}^{\nu_i-1} r^{(\nu_it-p)}_{i} \ \text{for} \  1 \leq t \leq k.
\eeq
With the same symbol we denote the generalization of the projection $\pi_{\Dc}$ to the space of formal series with coefficients in $W_{ L(\Lambda_j)}^{\otimes k_j}\otimes W_{L(\Lambda_0)}^{\otimes k_0}$. Let 
\beq\label{eq:p1}
x_{n_{r_{l}^{(1)},l}\alpha_{l}}(z_{r_{l}^{(1)},l}) \cdots     x_{n_{1,l}\alpha_{l}}(z_{1,l})\cdots x_{n_{r_{1}^{(1)},1}\alpha_{1}}(z_{r_{1}^{(1)},1})\cdots  x_{n_{1,1}\alpha_{1}}(z_{1,1})\ v_{ \Lambda}\eeq
be the generating function of the monomial \eqref{briefly}, which acts on the highest weight vector $v_{ \Lambda}$. From relations (\ref{rel1}) follows that the projection of \eqref{eq:p1} is:
\begin{equation}\label{eq:p2}
\pi_{\Dc} \left( x_{n_{r_{l}^{(1)},l}\alpha_{l}}(z_{r_{l}^{(1)},l})\cdots  x_{n_{1,1}\alpha_{1}}(z_{1,1}) \ v_{ \Lambda}\right)
\end{equation}
$$=\text{C}  x_{n_{r^{(\nu_l(k-1)+1)}_{l},l}^{(k)}\alpha_{l}}(z_{r_{l}^{(\nu_l(k-1)+1)},l})\cdots  x_{n_{r^{(\nu_lk)}_{l},l}^{(k)}\alpha_{l}}(z_{r_{l}^{(\nu_lk)},l})\cdots   x_{n_{1,l}^{(k)}\alpha_{l}}(z_{1,l}) \cdots$$
$$ \ \ \ \ \ \ \ \ \ \ \ \ \ \ \ \   \cdots  x_{n_{r^{(\nu_1(k-1)+1)}_{1},1}^{(k)}\alpha_{1}}(z_{r_{1}^{(\nu_1(k-1)+1)},1})\cdots  x_{n_{r^{(\nu_1k)}_{1},1}^{(k)}\alpha_{1}}(z_{r_{1}^{(\nu_1k)},1})\cdots    x_{n{_{1,1}^{(k)}\alpha_{1}}}(z_{1,1}) \ v_{ \Lambda_{j^k}},$$
$$ \ \ \ \ \ \ \ \ \ \ \ \ \ \ \ \ \otimes \cdots \otimes$$
$$
\otimes x_{n_{r^{(1)}_{l},l}^{(1)}\alpha_{l}}(z_{r_{l}^{(11)},l})\cdots  x_{n_{r^{(\nu_lk)}_{l},l}^{(1)}\alpha_{l}}(z_{r_{l}^{(\nu_lk)},l})\cdots   x_{n_{1,l}^{(1)}\alpha_{l}}(z_{1,l}) \cdots$$
$$ \ \ \ \ \ \ \ \ \ \ \ \ \ \ \ \   \cdots  x_{n_{r^{(1)}_{1},1}^{(1)}\alpha_{1}}(z_{r_{1}^{(1)},1})\cdots  x_{n_{r^{(\nu_1k)}_{1},1}^{(1)}\alpha_{1}}(z_{r_{1}^{(\nu_1k)},1})\cdots    x_{n{_{1,1}^{(1)}\alpha_{1}}}(z_{1,1}) \ v_{ \Lambda_{j^1}},$$
where $\text{C} \in \mathbb{C}^{*}$, and where
$$
0 \leq  n^{(t)}_{p,i}\leq  \nu_i, \  \ n_{p,i}=\sum_{t=1}^k n^{(t)}_{p,i}, \ \ \text{for every}   \  1 \leq p \leq r_{i}^{(1)}.$$
For fixed color $i$ the projection $\pi_{\Dc}$ places at most $ \nu_i$ generating functions $x_{\alpha_i}(z_{p,i})$ on
each tensor factor $v_{\Lambda_{j^t}}$, $1 \leq t \leq k$. This property of $\pi_{\Dc}$ is demonstrated in the following example for the case of affine Lie algebra $\widetilde{\mathfrak{g}}$ of type $G_2^{(1)}$.
\begin{example} Consider the formal power series
\beq\label{ex2}x_{\alpha_{2}}(z_{3,2}) x_{3\alpha_{2}}(z_{2,2})x_{4\alpha_{2}}(z_{1,2}) x_{\alpha_{1}}(z_{2,1})x_{2\alpha_{1}}(z_{1,1}) v_{\Lambda}\eeq
with coefficients in the principal subspace $W_{L(\Lambda_0+\Lambda_2)}$ of level 2 standard module $L(\Lambda_0+\Lambda_2)$ of affine Lie algebra $\widetilde{\mathfrak{g}}$ of type $G_2^{(1)}$. The projection $\pi_{\Dc}$ of \eqref{ex2}, where $\Dc=(3,2,2,1;2,1)$, onto 
$${W_{L(\Lambda_2)}}_{(1; 1)}\otimes {W_{L(\Lambda_0)}}_{(7; 2)}$$ 
is
\begin{align}\label{ex2proj}
& Cx_{\alpha_{2}}(z_{1,2})x_{2\alpha_{1}}(z_{1,1})v_{\Lambda_2}\otimes x_{\alpha_{2}}(z_{3,2}) x_{3\alpha_{2}}(z_{2,2})x_{3\alpha_{2}}(z_{1,2}) x_{\alpha_{1}}(z_{2,1})x_{\alpha_{1}}(z_{1,1}) v_{\Lambda_0},\end{align}
($\text{C} \in \mathbb{C}^{*}$). Graphically, the image of \eqref{ex2} can be represented as in Figure \ref{figure2}, where boxes in columns represent $n^{(t)}_{p,i}$.

\begin{figure}[htb!]
\centering
\setlength{\unitlength}{7mm}
\begin{picture}(8,4)
\linethickness{0.1mm}
\multiput(0,0)(1,0){6}%
{\line(0,1){1}}
\multiput(0,0)(1,0){5}%
{\line(1,0){1}}
\multiput(0,1)(1,0){1}%
{\line(1,0){5}}
\multiput(1,1)(1,0){3}%
{\line(0,1){2}}
\multiput(1,2)(1,0){1}%
{\line(1,0){2}}
\multiput(1,3)(1,0){1}%
{\line(1,0){2}}
\multiput(2,4)(1,0){1}%
{\line(1,0){1}}
\multiput(2,3)(1,0){1}%
{\line(0,1){1}}
\multiput(4,4)(1,0){1}%
{\line(1,0){1}}
\multiput(4,3)(1,0){1}%
{\line(0,1){1}}
\linethickness{0.75mm}
\multiput(0,3)(1,0){5}%
{\line(1,0){1}}
\multiput(3,0)(1,0){1}%
{\line(0,1){4}}
\multiput(5,0)(1,0){1}%
{\line(0,1){1}}
\multiput(5,3)(1,0){1}%
{\line(0,1){1}}
\put(6.5,1){\scriptsize{\footnotesize{$v_{ \Lambda_0 }$}}}
\put(6.5,3.3){\scriptsize{\footnotesize{$v_{ \Lambda_2}$}}}
\put(0.3, -0.5){\scriptsize{\footnotesize{$\alpha_2$}}}
\put(1.2, -0.5){\scriptsize{\footnotesize{$3\alpha_2$}}}
\put(2.2, -0.5){\scriptsize{\footnotesize{$4\alpha_2$}}}
\put(3.2, -0.5){\scriptsize{\footnotesize{$\alpha_1$}}}
\put(4.3, -0.5){\scriptsize{\footnotesize{$2\alpha_2$}}}
\end{picture}
\bigskip
\caption{ $\pi_\Dc\left(x_{\alpha_{2}}(z_{3,2}) x_{3\alpha_{2}}(z_{2,2})x_{4\alpha_{2}}(z_{1,2}) x_{\alpha_{1}}(z_{2,1})x_{2\alpha_{1}}(z_{1,1}) v_{\Lambda}\right)$}
\label{figure2}
\end{figure}
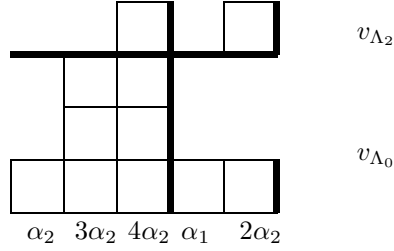

First note that we have $n_{1,1}^{(1)}=n_{1,1}^{(2)}=1$, since from the relation $x_{2\alpha_{1}}(z_{1,1})=0$ on $L(\Lambda_0+\Lambda_2)$ follows that with the projection $\pi_{\Dc}$ every factor $x_{\alpha_{1}}(z_{1,1})$ of the vertex operator $x_{2\alpha_{1}}(z_{1,1})$ is applied on the different tensor factor. With the projection the vertex operator $x_{\alpha_{1}}(z_{2,1})$ is applied only on the rightmost tensor factor, so $n_{2,1}^{(1)}=1$ and $n_{2,1}^{(2)}=0$. 
The relation $x_{4\alpha_{2}}(z_{1,2})=0$ on $L(\Lambda_0+\Lambda_2)$  implies that with the projection $\pi_{\Dc}$ three  vertex operators $x_{\alpha_{2}}(z_{1,2})$ are applied on the rightmost tensor factor and one vertex operator $x_{\alpha_{2}}(z_{1,3})$ is applied on the remaining tensor factor. From this follows that $n_{1,2}^{(1)}=3$ and $n_{1,2}^{(2)}=1$. With the projection the vertex operator $x_{3\alpha_{2}}(z_{2,2})$ is applied only on the rightmost tensor factor, so $n_{2,2}^{(1)}=3$ and $n_{2,2}^{(2)}=0$. Finally, the vertex operator $x_{\alpha_{2}}(z_{3,2})$ is applied on the rightmost tensor factor, therefore we have $n_{3,2}^{(1)}=1$ and $n_{3,2}^{(2)}=0$. 
\end{example}

\subsection{Coefficients of level $1$ intertwining operators and simple current maps}\label{ss3.2}
First let $ \widetilde{\mathfrak{g}}$ be of type $B_l^{(1)}$ or of type $C_l^{(1)}$. Denote by $I_1(\cdot, z)$ the intertwining operator of type $\binom{L(\Lambda_j)}{L(\Lambda_j)  \,\,  L(\Lambda_0)}$, defined by
\beq\label{sc0}
I_1(w_j, z)w_0=\text{exp}(zL(-1))Y(w_0,-z)w_j,
\eeq
where $w_j \in L(\Lambda_j)$ and $w_0 \in L(\Lambda_0)$ (cf. \cite{FHL}). Following  \cite{Bu1, Bu2} denote by $A_{\lambda_1}$ the constant term of the intertwining operator $I_1(v_{\Lambda_1}, z)$. This coefficient commutes with the action of quasi-particles (cf.  \cite{Bu1, Bu2}).

Fix $\lambda_1 \in \mathfrak{h}$ as in Section \ref{prelim}.  Following H. Li (cf. \cite{Li1, Li2, Li3}), for any $L(k\Lambda_0)$-module  $V$ we introduce the following notation $$(V^{(\lambda_1)}, Y_{\lambda_1}(\cdot, z))=(V, Y(\Delta(\lambda_1, z)\cdot, z)),$$
where 
$$\Delta(\lambda_1, z)=z^{\lambda_1}\text{exp}\left(\sum_{n\geq 1} \frac{\lambda_1(n)}{n}(-z)^{-n}\right).$$
By Proposition 2.6 in \cite{Li1} follows that $V^{(\lambda_1)}$ has a structure of a weak $L(k\Lambda_0)$-module. In particular, $L(k\Lambda_0)^{(\lambda_1)} \cong L(k\Lambda_1) $ is a simple current
$L(k\Lambda_0)$-module.

Following \cite{Bu1} and \cite{Bu2} denote by $e_{\lambda_1}$ simple current map, that is bijection
$$e_{\lambda_1}: L(\Lambda_j) \rightarrow  L(\Lambda_j)^{(\lambda_1)},$$
such that
\beq\label{sc1}
x_{\alpha}(m)e_{\lambda_{1}}=e_{\lambda_{1}}x_{\alpha}(m+\alpha(\lambda_{1})),
\eeq
for all $\alpha \in R$ and $m \in \mathbb{Z}$ and
\beq\label{sc2}
e_{\lambda_{1}}v_{\Lambda_0}=v_{\Lambda_1},
\eeq
(see \cite{DLM}, \cite{Li3},  or Remark 5.1 in \cite{P}). 

From \eqref{sc2} follows that the monomial vector $\pi_{\Dc}bv_{ \Lambda }\in \mathcal{B}_W$, where $W_{L(\Lambda)}$ is the principal subspace of the standard module $L(k_0\Lambda_0+k_1\Lambda_1)$ and  $b$ is of dual-charge type $\Dc$, equals 
\beq\label{sc10}
\pi_{\Dc}b(e_{\lambda_1}v_{\Lambda_0})^{\otimes k_1}\otimes v_{ \Lambda_0}^{\otimes k_0}.
\eeq
From \eqref{sc1} and from the definition of the projection $\pi_{\Dc}$ it follows that \eqref{sc10} is the coefficient of the variables 
\begin{eqnarray}\label{sc11}&z_{r_l^{(1)},l}^{-m_{r_l^{(1)},l}-n_{r_l^{(1)},l}} \cdots  z_{1,2}^{-m_{1,2}-n_{1,2}} z_{r_1^{(1)},1}^{-m_{r_1^{(1)},1}-n_{r_1^{(1)},1 }} \cdots\\
\nonumber
&\cdots  z_{r^{(k_0+1)}_{1}+1,1}^{-m_{r^{(k_0+1)}_{1}+1,1}-n_{r^{(k_0+1)}_{1}+1,1}}z_{r^{(k_0+1)}_{1},1}^{-m_{r^{(k_0+1)}_{1},1}-n_{r^{(k_0+1)}_{1},1}+(n_{r^{(k_0+1)}_{1},1}-k_0)}\cdots z_{1,1}^{-m_{1,1}-n_{1,1}+(n_{1,1}-k_0)}\end{eqnarray}
  in
\beq\label{sc12}\pi_{\Dc} \left( x_{n_{r_{l}^{(1)},l}\alpha_{l}}(z_{r_{l}^{(1)},l})\cdots   x_{n_{1,1}\alpha_{1}}(z_{1,1}) \ (e_{\lambda_1}v_{\Lambda_0})^{\otimes k_1}\otimes v_{ \Lambda_0}^{\otimes k_0}\right).\eeq
Denote this coefficient by $e_{\lambda_1}^{\otimes  k_1}\otimes 1^{\otimes k_0}\left(\pi_{\Dc}b^+v_{\Lambda_0} ^{\otimes k}\right)$, 
where 
\beq\label{sc13}
b^+ =b_{\alpha_l}\,\cdots \,b_{\alpha_2}b^+_{\alpha_1}\eeq
and
\begin{eqnarray}\label{sc14}b^+_{\alpha_1} &=x_{n_{r^{(1)}_{1},1}\alpha_{1}}(m_{r^{(1)}_{1},1})\cdots  x_{n_{r^{(k_0+1)}_{1}+1,1}\alpha_{1}}(m_{r^{(k_0+1)}_{1}+1,1})\\
\nonumber
& x_{n_{r^{(k_0+1)}_{1},1}\alpha_{1}}(m_{r^{(k_0+1)}_{1},1}+n_{r^{(k_0+1)}_{1},1}-k_0)\cdots  x_{n_{1,1}\alpha_{1}}(m_{1,1}+n_{1,1}-k_0).
\end{eqnarray}
From \eqref{rel7} and \eqref{rel8} follows that energies of quasi-particle monomial vectors $b^+v_{k\Lambda_0}$ satisfy difference conditions of energies of quasi-particle monomial vectors from the basis of principal subspace of the standard module $L(k\Lambda_0)$, (see also \cite{Bu2}).

For the case when $W_{L(\Lambda)}$ is the principal subspace of the standard module $L(k_0\Lambda_0+k_j\Lambda_j)$, $j \neq 1$, we will use the following lemma:  
\begin{lem}\label{lemoi}
In the case of affine Lie algebra $ \widetilde{\mathfrak{g}}$ of type $B_l^{(1)}$ 
\beq\label{sc3}
e_{\lambda_{1}}v_{\Lambda_l}=x_{\epsilon_1}(-1)v_{\Lambda_l}.
\eeq
In the case of affine Lie algebra $ \widetilde{\mathfrak{g}}$ of type $C_l^{(1)}$ and $j \neq 1$ 
\beq\label{sc4}
e_{\lambda_{1}}v_{\Lambda_j}=x_{\frac{1}{\sqrt{2}}(\epsilon_{l-j+1}+\epsilon_{j})}(-1)\cdots x_{\frac{1}{\sqrt{2}}(\epsilon_2+\epsilon_{l-1})}(-1)x_{\frac{1}{\sqrt{2}}(\epsilon_1+\epsilon_l)}(-1)v_{\Lambda_{l-j+2}}.
\eeq
\end{lem}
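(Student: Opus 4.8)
\emph{Proof strategy.} The plan is to identify the vectors $e_{\lambda_1}v_{\Lambda_l}$ and $e_{\lambda_1}v_{\Lambda_j}$ by first reading off, from the defining relation \eqref{sc1}, their $\mathfrak h$-weight and degree; then observing that \eqref{sc1} forces them to be highest weight vectors for the Borel subalgebra of $\widetilde{\mathfrak g}$ obtained from the standard one under the mode shift $x_\alpha(m)\mapsto x_\alpha(m+\langle\alpha,\lambda_1\rangle)$; and finally checking that the right-hand sides of \eqref{sc3} and \eqref{sc4} are nonzero vectors of the correct weight and degree annihilated by the shifted nilradical. Since in the irreducible module $L(\Lambda_j)^{(\lambda_1)}$ the space of vectors with these properties is at most one-dimensional, while both $e_{\lambda_1}v_{\Lambda_j}$ and the right-hand sides are nonzero, this pins $e_{\lambda_1}v_{\Lambda_j}$ down up to a nonzero scalar, which we absorb into the normalization of the isomorphism $L(\Lambda_l)^{(\lambda_1)}\cong L(\Lambda_l)$, resp.\ $L(\Lambda_j)^{(\lambda_1)}\cong L(\Lambda_{l-j+2})$ (for $j\neq 0$ relation \eqref{sc1} fixes it only up to scalar). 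This follows the pattern of the computation in \cite{Bu1,Bu2} underlying \eqref{sc10}--\eqref{sc14}.

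First I would record how $e_{\lambda_1}$ acts on weights and degrees. By \eqref{sc1} and its analogue for $\mathfrak h$ (coming from $\Delta(\lambda_1,z)$, cf.\ \cite{Li1}), $e_{\lambda_1}v_{\Lambda_j}$ has $\mathfrak h$-weight $\lambda_j+\lambda_1$ and conformal weight $\langle\lambda_1,\lambda_j\rangle+\tfrac{1}{2}\langle\lambda_1,\lambda_1\rangle$ above that of $v_{\Lambda_j}$; using the fundamental weights of Section \ref{prelim} and the conformal weights of $L(\Lambda_j)$ and $L(\Lambda_{j'})$ (with $j'=l$ in type $B_l^{(1)}$ and $j'=l-j+2$ in type $C_l^{(1)}$), a short computation puts $e_{\lambda_1}v_{\Lambda_j}$ in degree $1$ relative to $v_{\Lambda_l}$ in type $B_l^{(1)}$ and in degree $l-j+1$ relative to $v_{\Lambda_{l-j+2}}$ in type $C_l^{(1)}$ --- exactly the weight and the number of creation operators of the right-hand sides of \eqref{sc3}, \eqref{sc4}. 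Their nonvanishing I would obtain as in Lemmas \ref{pocuv1} and \ref{pocuv2}: the roots occurring are short (with the single exception of the long root $\sqrt{2}\epsilon_{(l+1)/2}$ when $l$ is odd) and are pairwise orthogonal or equal, so the subalgebras $\widetilde{\mathfrak{sl}}_2(\alpha)$ they generate commute; the restriction of $L(\Lambda_{j'})$ to each is a standard $A_1^{(1)}$-module which is $L(2\Lambda_0)$ (level two) or $L(\Lambda_0)$ (level one) when $\langle\lambda_{j'},\alpha^\vee\rangle=0$, and $L(\Lambda_0+\Lambda_1)$ (level two) when $\langle\lambda_{j'},\alpha^\vee\rangle=1$; in all three $x_\alpha(-1)$ does not kill the highest weight vector, and $x_\alpha(-1)^2$ is nonzero precisely in the $L(2\Lambda_0)$ case, which is exactly the case in which a root of \eqref{sc4} occupies two positions --- so the product acts on $v_{\Lambda_{j'}}$ without vanishing.

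Next, by \eqref{sc1} and the highest weight property of $v_{\Lambda_j}$ (annihilated by $x_\alpha(m)$ for $m\ge m_\alpha$, where $m_\alpha=0$ for $\alpha\in R_{+}$ and $m_\alpha=1$ for $\alpha\in R_{-}$, and by $h(m)$ for $m\ge 1$), the vector $e_{\lambda_1}v_{\Lambda_j}$ is killed by $x_\alpha(m)$ for all $m\ge m_\alpha-\langle\alpha,\lambda_1\rangle$ and by $h(m)$ for $m\ge 1$. Since $m_{\alpha+\beta}\le m_\alpha+m_\beta$ and $m_\alpha+m_{-\alpha}=1$, these operators span the nilradical of a Borel subalgebra of $\widetilde{\mathfrak g}$, so in the irreducible module $L(\Lambda_{j'})$ the subspace of vectors killed by all of them and of a fixed $\mathfrak h$-weight and degree is at most one-dimensional. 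It remains to check that the right-hand side of \eqref{sc3}, resp.\ \eqref{sc4}, lies in this subspace: commuting each such $x_\alpha(m)$ to the right past the creation operators, and using the Serre-type relations of $\widetilde{\mathfrak g}$, the identity $x_{2\alpha}(z)=x_\alpha(z)^2$, and the relations \eqref{inBC1}--\eqref{inBCFG10} together with the restrictions to the relevant $\widetilde{\mathfrak{sl}}_2(\alpha)$ (and, in type $B_l^{(1)}$, with their images under the finite Weyl group elements fixing the one-dimensional weight space of $v_{\Lambda_l}$), one sees that each bracket either vanishes in $\widetilde{\mathfrak g}$ --- most pairs of the roots occurring in \eqref{sc3}, \eqref{sc4} do not sum to a root --- or produces an operator annihilating $L(\Lambda_{j'})$, such as a quasi-particle $x_{3\alpha}(m)$ of charge beyond the level-two bound \eqref{rel1}; the diagonal terms arising when a root is repeated are disposed of in the same way.

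The step I expect to be the main obstacle is this last verification in type $C_l^{(1)}$, where the product in \eqref{sc4} has $l-j+1$ factors and may contain the short root $\tfrac{1}{\sqrt{2}}(\epsilon_a+\epsilon_{l-a+1})$ twice --- precisely when both $a$ and $l+1-a$ lie in $\{1,\dots,l-j+1\}$ --- so that keeping track of the commuting $\widetilde{\mathfrak{sl}}_2$-triples and of the proportionality constant requires care; it is cleanest to organize the computation by induction on the number of factors, with base cases $j=l$ in type $C_l^{(1)}$ and \eqref{sc3} in type $B_l^{(1)}$, each matched against $e_{\lambda_1}v_{\Lambda_0}=v_{\Lambda_1}$ from \eqref{sc2} via \eqref{sc1}. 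Finally, as in \cite{Bu1,Bu2}, only the combinatorial shape of \eqref{sc3}, \eqref{sc4} --- which modes $m_{p,i}$ get shifted, and by how much --- enters the linear independence argument of Section \ref{ss3.4}, so the above normalization is exactly what the application needs.
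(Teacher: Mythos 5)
Your proposal is correct and is essentially the paper's own argument: both identify the vector as the (up to scalar) unique singular vector of its weight in the irreducible module, verifying the required annihilation properties through restrictions to the subalgebras $\widetilde{\mathfrak{sl}}_2(\alpha)$ and the charge bounds on level one and two. The only difference is one of direction — the paper applies $e_{\lambda_1}^{-1}$ to the right-hand side and checks it is a highest weight vector for the standard Borel subalgebra, whereas you check the right-hand side directly against the $\lambda_1$-shifted Borel subalgebra — which is a trivial reformulation; your explicit weight/degree bookkeeping and nonvanishing check only make explicit what the paper leaves implicit.
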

\begin{proof} Relations \eqref{sc3} and \eqref{sc4} are verified by arguing as in the proof in \cite[Lemma 5.3]{P}. $e_{\lambda_{1}}^{-1}x_{\epsilon_1}(-1)v_{\Lambda_l}$ is a weight vector with weight $\Lambda_l$, since we have
$$\alpha_1^\vee (0)e_{\lambda_{1}}^{-1}x_{\epsilon_1}(-1)v_{\Lambda_l}=e_{\lambda_{1}}^{-1}(\alpha_1^\vee(0)-c)x_{\epsilon_1}(-1)v_{\Lambda_l}=0,$$
$$\alpha_j^\vee (0) e_{\lambda_{1}}^{-1}x_{\epsilon_1}(-1)v_{\Lambda_l}=e_{\lambda_{1}}^{-1}\alpha_j^\vee (0) x_{\epsilon_1}(-1)v_{\Lambda_l}=0, \ \ \text{for} \ \ j \neq 1, l,$$
$$\alpha_l^\vee (0) e_{\lambda_{1}}^{-1}x_{\epsilon_1}(-1)v_{\Lambda_l}= e_{\lambda_{1}}^{-1}\alpha_l^\vee  (0) x_{\epsilon_1}(-1)v_{\Lambda_l}=e_{\lambda_{1}}^{-1}x_{\epsilon_1}(-1)v_{\Lambda_l},$$
$$x_{-\theta}(1) e_{\lambda_{1}}^{-1}x_{\epsilon_1}(-1)v_{\Lambda_l}=e_{\lambda_{1}}^{-1}x_{-\theta}(2) x_{\epsilon_1}(-1)v_{\Lambda_l}=0.$$
From $x_{\alpha_1}(-1) x_{\alpha_1}(-1)v_{\Lambda_l}=0$ follows that
$$x_{\epsilon_1}(0)x_{\alpha_1}(-1) x_{\alpha_1}(-1)v_{\Lambda_l}=2x_{\alpha_1}(-1) x_{\epsilon_1}(-1)v_{\Lambda_l}=0,$$ so we have 
$$x_{\alpha_1}(0)  e_{\lambda_{1}}^{-1}x_{\epsilon_1}(-1)v_{\Lambda_l}=e_{\lambda_{1}}^{-1}x_{\alpha_1}(-1) x_{\epsilon_1}(-1)v_{\Lambda_l}=0.$$
We also have
$$ x_{\alpha_j}(0) e_{\lambda_{1}}^{-1}x_{\epsilon_1}(-1)v_{\Lambda_l}= e_{\lambda_{1}}^{-1} x_{\alpha_j}(0) x_{\epsilon_1}(-1)v_{\Lambda_l}=e_{\lambda_{1}}^{-1}x_{\epsilon_1}(-1)x_{\alpha_j}(0)v_{\Lambda_l}=0,$$
for $j \neq1,l$, and
$$ x_{\alpha_l}(0) e_{\lambda_{1}}^{-1}x_{\epsilon_1}(-1)v_{\Lambda_l}= e_{\lambda_{1}}^{-1} x_{\alpha_l}(0) x_{\epsilon_1}(-1)v_{\Lambda_l}=e_{\lambda_{1}}^{-1}x_{\epsilon_1+\epsilon_l}(-1)v_{\Lambda_l}=0,$$
since the restriction of $L(\Lambda_l)$ on $\widetilde{\mathfrak{sl}}_2(\epsilon_1+\epsilon_l)$ is a level one standard $A_1^{(1)}$-module of highest weight $\Lambda_1$. Hence \eqref{sc3} holds and $L(\Lambda_l)^{(\lambda_1)} \cong L(\Lambda_l) $.

In a similar way we prove that $e_{\lambda_{1}}^{-1} u:=e_{\lambda_{1}}^{-1}x_{\frac{1}{\sqrt{2}}(\epsilon_{l-j+1}+\epsilon_{j})}(-1)\cdots x_{\frac{1}{\sqrt{2}}(\epsilon_2+\epsilon_{l-1})}(-1)$\\
$x_{\frac{1}{\sqrt{2}}(\epsilon_1+\epsilon_l)}(-1)v_{\Lambda_{l-j+2}}
$ is a weight vector with weight $\Lambda_{j}$, since we have
$$\alpha_j^\vee (0) e_{\lambda_{1}}^{-1}u=e_{\lambda_{1}}^{-1}\alpha_j^\vee (0) u=e_{\lambda_{1}}^{-1}u,$$
$$\alpha_1^\vee (0) e_{\lambda_{1}}^{-1}u=e_{\lambda_{1}}^{-1}(\alpha_1^\vee (0) -c) u=0,$$
$$\alpha_i^\vee (0) e_{\lambda_{1}}^{-1}u= e_{\lambda_{1}}^{-1}\alpha_i^\vee (0) u=0, \ \ \text{for} \ \ i \neq 1,j,$$
$$x_{-\theta}(1) e_{\lambda_{1}}^{-1}u=e_{\lambda_{1}}^{-1}x_{-\theta}(2) u=0.$$
From $x_{\alpha_1}(-1) x_{\alpha_1}(-1)v_{\Lambda_{l-j+2}}=0$ follows that $x_{\alpha_1}(-1) x_{\frac{1}{\sqrt{2}}(\epsilon_1+\epsilon_l)}(-1)v_{\Lambda_{l-j+2}}=0,$ so we have 
$$x_{\alpha_1}(0)  e_{\lambda_{1}}^{-1}u=0.$$
We also have
$$ x_{\alpha_j}(0) e_{\lambda_{1}}^{-1}u= e_{\lambda_{1}}^{-1} x_{\alpha_j}(0) u=0,$$
and
$$ x_{\alpha_i}(0) e_{\lambda_{1}}^{-1}u= e_{\lambda_{1}}^{-1} x_{\alpha_i}(0) u=0,$$
for $i \neq j, 1$. The last statement is true also in the case when $x_{\alpha_i}(0)$ doesn't commute with monomials in $u$. In this case, by induction on $j$ follows that $x_{\alpha_i}(0)u=u'x_{\frac{1}{\sqrt{2}}(\epsilon_{a}+\epsilon_{b})}(-1)v_{\Lambda_{l-j+2}}=0$, since the restriction of $L(\Lambda_l)$ on $\widetilde{\mathfrak{sl}}_2(\frac{1}{\sqrt{2}}(\epsilon_{a}+\epsilon_{b}))$, for any $a, b \in \{1, \ldots, l\}$, is a level two standard $A_1^{(1)}$-module of highest weight $2\Lambda_1$. Hence \eqref{sc4} holds and $L(\Lambda_j)^{(\lambda_1)} \cong L(\Lambda_{l-j+2}) $.
\end{proof}

By Proposition 2.4 in \cite{Li1} there is an intertwining operator of type $\binom{L(\Lambda_j)^{(\lambda_1)} }{L(\Lambda_j)   \,\,  L(\Lambda_0)^{(\lambda_1)}}$, which we will denote by $I_2(\cdot, z)$, such that
\begin{eqnarray}\nonumber
I_2(v_{\Lambda_j}, z)e_{\lambda_{1}}v_{\Lambda_0}&=&e_{\lambda_{1}}I_1(\Delta(\lambda_1,z)v_{\Lambda_j}, z) v_{\Lambda_0}\\
\nonumber
&=&e_{\lambda_{1}}\text{exp}(zL(-1))z^{\left< \lambda_1, \Lambda_j\right>}v_{\Lambda_j}\\
\nonumber
&=& z^{\left< \lambda_1, \Lambda_j\right>}\left[ e_{\lambda_{1}} v_{\Lambda_j}+e_{\lambda_{1}} v_{\Lambda_j}zL(-1)+\cdots \right].
\end{eqnarray}
Denote by $I_3(\cdot, z)$ the intertwining operator of type $\binom{L(\Lambda_j)^{(\lambda_1)} }{L(\Lambda_0)^{(\lambda_1)}   \,\,  L(\Lambda_j)}$ and by $A_{\lambda_1}$ the coefficient of $(-z)^{\left< \lambda_1, \Lambda_j\right>}$ in $I_3(e_{\lambda_1}v_{\Lambda_0}, z)v_{\Lambda_j}$. From Lemma \ref{lemoi} follows 
\beq \label{op2}
A_{\lambda_1}v_{\Lambda_j}=e_{\lambda_1}v_{\Lambda_j}.
\eeq
From the commutator formula (cf. formula (2.13) of \cite{Li3}) we have
\beq\nonumber
\left[ x_{\alpha_i}(m), I_3(e_{\lambda_1}v_{\Lambda_0}, z)\right]=\sum_{t\geq 0}\binom{m}{t}I_3(x_{\alpha_i}(t)e_{\lambda_1}v_{\Lambda_0}, z)=0.
\eeq

In the case when $ \widetilde{\mathfrak{g}}$ is of type $F_4^{(1)}$ or of type $G_2^{(1)}$, recall from \cite{BK} and \cite{Bu3} the constant term $A_{\lambda_1}$ of the operator $x_{\theta}(z)$. We will use the same symbol to denote the coefficient of $z^{-1}$ in $z^{-2}x_{\theta}(z)$, i.e. 
\beq\label{sc5}
A_{\lambda_1} = \text{Res}_z z^{-2}x_{\theta}(z)=x_{\theta}(-2).
\eeq

Now, consider the action of the operator 
$$(A_{\lambda_1})_s:=\underbrace{1\otimes\cdots \otimes  1}_{k-s \ \text{factors}} \otimes A_{\lambda_1}\underbrace{\otimes 1\otimes \cdots \otimes 1}_{ s-1 \ \text{factors}},
$$
for $s=n_{1,1}$, where $A_{\lambda_1}$ is as above, on the vector $\pi_{\Dc}bv_{ \Lambda }$, where $bv_{ \Lambda }\in \mathcal{B}_W$ and $b$ is of dual-charge type $\Dc$. Since $A_{\lambda_1}$ commutes with the action of quasi-particles, it follows that the image $(A_{\lambda_1})_s(\pi_{\Dc}bv_{ \Lambda})
$ is the coefficient of the variables 
$z_{r_l^{(1)},l}^{-m_{r_l^{(1)},l}-n_{r_l^{(1)},l}} \cdots   z_{1,1}^{-m_{1,1}-n_{1,1}}$  
in
\begin{equation}\label{sc7}
(A_{\lambda_1})_s\pi_{\Dc}x_{n_{r_{l}^{(1)},l}\alpha_{l}}(z_{r_{l}^{(1)},l})\cdots x_{n_{1,1}\alpha_{1}}(z_{1,1})v_{ \Lambda}.\end{equation}
Using \eqref{eq:p2} follows that in the $s$-th tensor factor (from the right) of \eqref{sc7}, we have $F_s  A_{\lambda_1} v_{ \Lambda_{j^s}}$, where 
$$F_s:=  x_{n_{r^{(\nu_l(k-1)+1)}_{l},l}^{(s)}\alpha_{l}}(z_{r_{l}^{(\nu_l(k-1)+1)},l})\cdots  x_{n_{r^{(\nu_lk)}_{l},l}^{(s)}\alpha_{l}}(z_{r_{l}^{(\nu_lk)},l})\cdots   x_{n_{1,l}^{(s)}\alpha_{l}}(z_{1,l}) \cdots$$
$$ \ \ \ \ \ \ \ \ \ \ \ \ \ \ \ \   x_{n_{r^{(s)}_{1},1}^{(s)}\alpha_{1}}(z_{r_{1}^{(s)},1})\cdots    x_{n{_{1,1}^{(s)}\alpha_{1}}}(z_{1,1}) .$$
Using \eqref{op2}, in the case of affine Lie algebras of type $B_l^{(1)}$ or of type $C_l^{(1)}$, we rewrite the $s$-th tensor factor (from the right) of \eqref{sc7} as
\beq\label{sc8}
F_s  A_{\lambda_1} v_{ \Lambda_{j^s}}=e_{\lambda_1}F_s  v_{ \Lambda_{j^s}}z_{r_{1}^{(s)},1}\cdots  z_{1,1}.
\eeq
From \eqref{sc8}, now follows
\beq\label{sc9}
(A_{\lambda_1})_s(\pi_{\Dc}bv_{ \Lambda})=(e_{\lambda_1})_s(\pi_{\Dc}b^+v_{ \Lambda}),
\eeq
where 
$$(e_{\lambda_1})_s:=\underbrace{1\otimes\cdots \otimes  1}_{k-s \ \text{factors}} \otimes e_{\lambda_1}\underbrace{\otimes 1\otimes \cdots \otimes 1}_{ s-1 \ \text{factors}},
$$
and where 
\beq\label{sc90}
b^+ =b_{\alpha_l}\,\cdots \,b_{\alpha_2}b^+_{\alpha_1}\eeq
with
\begin{eqnarray}\label{sc91}b^+_{\alpha_1} &=x_{n_{r^{(1)}_{1},1}\alpha_{1}}(m_{r^{(1)}_{1},1})\cdots  x_{n_{r^{(s+1)}_{1}+1,1}\alpha_{1}}(m_{r^{(s+1)}_{1},1})\\
\nonumber
& x_{n_{r^{(s)}_{1},1}\alpha_{1}}(m_{r^{(s)}_{1},1}+1)\cdots  x_{n_{2,1}\alpha_{1}}(m_{1,1}+1).
\end{eqnarray}
In the case of affine Lie algebras of type $F_4^{(1)}$ or of type $G_2^{(1)}$, the role of $e_{\lambda_1}$ will play Weyl group translation operator $e_{\theta}$, which we will introduce in the next section.

\subsection{Weyl group translation operators}\label{ss3.3}
Denote by $e_{\alpha}$ the Weyl group translation operator 
$$
 e_{\alpha}=\exp  x_{-\alpha}(1)\exp  (- x_{\alpha}(-1))\exp  x_{-\alpha}(1) \exp x_{\alpha}(0)\exp   (-x_{-\alpha}(0))\exp x_{\alpha}(0) $$
for every root $\alpha$, (cf. \cite{K}). We will use the following property of the Weyl group translation operator $e_{\alpha}$:
\beq\label{wg1}
x_{\beta}(j)e_{\alpha}=e_{\alpha}x_{\beta}(j+\beta(\alpha\sp\vee)) \quad \text{for all }   \alpha,\beta \in R\text{ and } j \in \mathbb{Z}.
\eeq

Let  $ \widetilde{\mathfrak{g}}$ be of type $F_4^{(1)}$ or of type $G_2^{(1)}$, and assume that $\alpha=\theta$. Since we have
\beq\label{wg2}
e_{\theta}v_{\Lambda_j}=x_{\theta}(-2) v_{\Lambda_j},
\eeq
we rewrite the $s$-th tensor factor (from the right) of \eqref{sc7} as
\beq\label{wg2}
F_s  A_{\lambda_1} v_{ \Lambda_{j^s}}=e_{\theta}F_s  v_{ \Lambda_{j_s}} z_{r_{1}^{(s)},1} \cdots z_{1,1}.
\eeq
Now we have
\beq\label{wg3}
(A_{\lambda_1})_s(\pi_{\Dc}bv_{ \Lambda})=(e_{\theta})_s(\pi_{\Dc}b^+v_{ \Lambda}),
\eeq
where 
$$(e_{\theta})_s:=\underbrace{1\otimes\cdots \otimes  1}_{k-s \ \text{factors}} \otimes e_{\theta}\underbrace{\otimes 1\otimes \cdots \otimes 1}_{ s-1 \ \text{factors}},
$$
and where $b^+$ is as in \eqref{sc90} and \eqref{sc91}. 

If we continue to apply the procedure of action of operators $(A_{\lambda_1})_s$ and $(e_{\lambda_1})_s$ (or $(A_{\lambda_1})_s$ and $(e_{\theta})_s$ in the case of affine Lie algebras of type $F_4^{(1)}$ or of type $G_2^{(1)}$), on the $b^+v_{ \Lambda}$, after finitely many steps we will obtain the monomial vector
\beq\nonumber
\tilde{b}v_{ \Lambda}=b_{\alpha_{l}}\cdots b_{\alpha_{2}}{\tilde{b}}_{\alpha_{1}}v_{ \Lambda},\eeq
where
\begin{align}\nonumber
{\tilde{b}}_{\alpha_{1}}&= x_{n_{r_{1}^{(1)},1}\alpha_1}(\tilde{m}_{r_{1}^{(1)},1})\cdots x_{n_{r_{1}^{(s)}+1,1}\alpha_{1}} (\tilde{m}_{r_{1}^{(s)}+1,1})x_{n_{r_{1}^{(s)},1}\alpha_{1}} (\tilde{m}_{r_{1}^{(s)},1})\cdots x_{n_{1,1}\alpha_{1}} (\tilde{m}_{1,1})
\\
\nonumber
&= x_{n_{r_{1}^{(1)},1}\alpha_1}(m_{r_{1}^{(1)},1})\cdots x_{n_{r_{1}^{(s)}+1,1}\alpha_{1}} (m_{r_{1}^{(s)}+1,1})x_{n_{r_{1}^{(s)},1}\alpha_{1}} (m_{r_{1}^{(s)},1}-m_{1,1}-s)\cdots x_{n_{1,1}\alpha_{1}} (-s).
\end{align}
Note that the quasi-particle monomial $\tilde{b}$ has the same charge-type and the dual charge-type as $b^+$ and belongs to $B_W$. 

We use the fact that
\beq \label{wg4}
e_{\alpha_1}v_{ \Lambda_j }=Cx_{\alpha_1}(-1)v_{\Lambda_j},
\eeq
where $C \in \mathbb{C}\setminus \{0\}$. Hence,  the vector $\pi_{\Dc}\,b^+v_{ \Lambda }$  equals the coefficient of the variables
\beq\label{wg5}
z_{r_l^{(1)},l}^{-m_{r_l^{(1)},l}-n_{r_l^{(1)},l}} \cdots z_{r_1^{(1)},1}^{-m_{r_1^{(1)},1}-n_{r_1^{(1)},1}}\cdots  z_{r_{1}^{(s)}+1,1}^{-m_{r_1^{(s)}+1,1}-n_{r_1^{(1)}+1,1}} z_{r_{1}^{(s)},1}^{-m_{r_1^{(s)},1}-n_{r_1^{(s)},1}+s}\cdots z_{1,1}^{-m_{1,1}} 
\eeq
 in 
\beq\label{wg5}
 C \ \pi_{\Dc} x_{n_{r_{l}^{(1)},l}\alpha_{l}}(z_{r_{l}^{(1)},l})  \cdots x_{n_{2,1}\alpha_{1}}(z_{2,1}) 
\ (1^{\otimes (k-s)} \otimes e_{\alpha_1}^{\otimes s})v_{ \Lambda}.
\eeq

By shifting the operator  $(1^{\otimes (k-s)} \otimes e_{\alpha_1}^{\otimes s})$ all the way to the left in \eqref{wg5}, and by dropping it, from \eqref{wg1} we get $\pi_{\Dc'}\,b'v_{ \Lambda}$, where 
\beq\label{wq6}b'=b_{\alpha_{l}}\cdots \, b'_{\alpha_{2}}\,b'_{\alpha_{1}}\eeq
for
\begin{align*}
b'_{\alpha_{1}}&=x_{n_{r^{(1)}_{1},1}\alpha_{1}}(\tilde{m}_{r^{(1)}_{1},1}+2n_{r^{(1)}_{1},1})\cdots   x_{n_{2,1}\alpha_{1}}(\tilde{m}_{2,1}+2n_{2,1}),\\
b'_{\alpha_{2}}&=x_{n_{r^{(1)}_{2},2}\alpha_{2}}(m_{r^{(1)}_{2},2}-n^{(1)}_{r_{2}^{(1)},2}-\cdots-n^{(s)}_{r_{2}^{(1)},2})\cdots x_{n_{1,2}\alpha_{2}}(m_{1,2}-n^{(1)}_{1,2}-\cdots-n^{(s)}_{1,2}).
\end{align*}
Note, that the dual charge-type $\Dc'$  of $b'$  equals
$$\Dc'=\big(r^{(1)}_{l}, \ldots , r^{( k_{\alpha_l})}_{l};\cdots ; r^{(1)}_{2}, \ldots , r^{(k_{\alpha_2})}_{2};\, r^{(1)}_{1}-1,\ldots, r_1^{(n_{1,1})}-1,\underbrace{0, \ldots, 0}_{k-s}\big).
$$
Finally, using the same arguments as in \cite{Bu1,Bu2, Bu3, BK} one can check that $b'$ belongs to $B_W$.

\subsection{Proof of linear independence}\label{ss3.4}
Assume that we have a relation of linear dependence between elements of $\mathcal{B}_W$
\begin{equation}\label{eq:d1}
\sum_{a \in A}
c_{a}b_av_{\Lambda}=0, 
\end{equation}
where $A$ is a finite non-empty set and $c_{a} \neq 0$ for all $a \in A$. Furthermore, assume that all $b_a$ have the same color-type $ \left(n_{l},\ldots, n_{1}\right)$. Let $a_0 \in A$ be such that $b_{a_0} < b_a$ for all $a \in A$, $a \neq a_0$. Let $b_0$ be of charge-type $\Cc$ as in \eqref{charge-type} and dual-charge-type $\Dc$ as in \eqref{dual-charge-type}.

On (\ref{eq:d1}) we act with the projection $\pi_{\Dc}$ 
$$\pi_{\Dc}: W_{L(\Lambda)}\rightarrow  {W_{L(\Lambda_{j^k})}}_{(\mu^{(k)}_{l};\ldots;\mu_{1}^{(k)})}\otimes \cdots \otimes  {W_{L(\Lambda_{j^1})}}_{(\mu^{(1)}_{l};\ldots;\mu_{1}^{(1)})},$$  
where $j^t \in \{0,j\}$ and $\mu^{(t)}_{i}$, $1 \leq t\leq k$ is as in \eqref{proj0}. From the definition of projection, it follows that by $\pi_{\Dc}$ all monomial vectors $b_av_{ \Lambda}$ with monomials $b_a$ which have higher charge-type than $\Cc$ with respect to \eqref{order2} will be mapped to zero-vector. Therefore, we assume that in
\beq\label{eq:d2}
\sum_{a \in A}
c_{a}\pi_{\Dc}b_av_{ \Lambda}=0, 
\eeq 
all monomials $b_a$ are of charge-type $\Cc$.

In the case when $ \widetilde{\mathfrak{g}}$ be of type $B_l^{(1)}$ or of type $C_l^{(1)}$ and $\Lambda=k_0\Lambda_0+k_1\Lambda_1$ we use the fact that $v_{\Lambda_1}=e_{\lambda_1}v_{\Lambda_0}$. Now, from \eqref{eq:d2} we have
\beq\nonumber
0=\sum_{a \in A}
c_{a}\pi_{\Dc}b_a(e_{\lambda_1}v_{\Lambda_0})^{\otimes k_j}\otimes v_{ \Lambda_0}^{\otimes k_0}=(e_{\lambda_1}^{\otimes k_j}\otimes 1^{\otimes k_0})\sum_{a \in A}
c_{a}\pi_{\Dc}b^{+}_a v_{ \Lambda_0}^{\otimes k}.
\eeq
If we drop the operator $e_{\lambda_1}^{\otimes  k_j}\otimes 1^{\otimes k_0}$, we will get 
\beq \label{eq:d40}
\sum_{a \in A}
c_{a}\pi_{\Dc}b^{+}_a v_{ \Lambda_0}^{\otimes k}=0,
\eeq
where $b^{+}_a $ is of the form as in \eqref{sc13} and \eqref{sc14}. 
So, in the case of $B_l^{(1)}$ or of type $C_l^{(1)}$ and $\Lambda=k_0\Lambda_0+k_1\Lambda_1$, we have $c_a=0$, and the assertion of the Theorem \ref{t1} follows.
 
Now, let $ \widetilde{\mathfrak{g}}$ be of type $B_l^{(1)}$, $C_l^{(1)}$, $F_4^{(1)}$ or of type $G_2^{(1)}$ and $\Lambda=k_0\Lambda_0+k_j\Lambda_j$, $j \neq 1$. On (\ref{eq:d2}) apply the procedure described in Sections \ref{ss3.2} and \ref{ss3.3} until all quasi-particles of color $1$ are removed from the  summand $c_{a_0}\pi_{\Dc}\, b_{a_0}  v_{\Lambda}$.
This also removes all quasi-particles of color $1$ from other summands, so that \eqref{eq:d2}  becomes
\beq\label{eq:d41}
\sum_{a \in A}
c'_{a}\pi_{\Dc}b'_a v_{ \Lambda}=0,
\eeq
where $b'_a $ are of the form as in \eqref{wq6} and  scalars $c'_{a}\neq 0$. The summation in \eqref{eq:d41} goes over all $a\neq a_0$ such that $b_{a}(\alpha_1)= b_{a_0}(\alpha_1)$ since the summands such that  $b_{a_0}(\alpha_1)< b_a(\alpha_1)$ will be annihilated in the process.
	
In the case of $B_l^{(1)}$ monomial vectors in \eqref{eq:d41} can be realized as elements of the principal subspace $W_{L(k_0\Lambda_0+k_l\Lambda_l)}$ of the affine Lie algebra of type $B_{l-1}^{(1)}$. In particular, when $\widetilde{\mathfrak{g}}$ is of type $B_2^{(1)}$, with the described procedure we get monomial vectors which can be realized as elements of $W_{L((2k_0+k_2)\Lambda_0+(2k-2k_0-k_2)\Lambda_2)}$ of the affine Lie algebra of type $A_{1}^{(1)}$. In the case of $C_l^{(1)}$, monomial vectors in \eqref{eq:d41} can be realized as elements in $W_{L((2k_0+k_j)\Lambda_0+(2k-2k_0-k_j)\Lambda_j)}$  of the affine Lie algebra of type $A_{l-1}^{(1)}$. In the case of $F_4^{(1)}$, monomial vectors in \eqref{eq:d41} can be realized as elements in $W_{L(k_0\Lambda_0+k_3\Lambda_3)}$  of the affine Lie algebra of type $C_3^{(1)}$, and in the case of $G_2^{(1)}$ monomial vectors in \eqref{eq:d41} we realize as elements of $W_{L((3k_0+2k_2)\Lambda_0+(3k-3k_0-2k_2)\Lambda_2)}$ of the affine Lie algebra of type $A_{1}^{(1)}$. For all of these cases we can use Georgiev argument on linear independence from \cite{G1}, so by  proceeding inductively on charge-type (and on $l$ for the case of $B_l^{(1)}$) we get $c_{a}=0$ and the desired theorem follows.

\section{Characters of principal subspaces}
Character $\ch W_{L(\Lambda)}$ of the principal subspace $W_{L(\Lambda)}$ is defined by 
 $$
 \ch W_{L(\Lambda)}=\sum_{m,n_1,\ldots,n_l\geqslant 0} 
\dim (W_{L(\Lambda)})_{-m\delta +\Lambda+n_1\alpha_1 +\ldots + n_l\alpha_l}\, q^{m}y^{n_1}_{1}\cdots y^{n_l}_{l},
$$
where $q,  y_1,\ldots, y_l$ are formal variables and $(W_{L(\Lambda)})_{-m\delta +\Lambda +n_1\alpha_1 +\ldots + n_l\alpha_l}$ denote the  weight subspaces of $W_{L(\Lambda)}$ of weight $-m\delta +\Lambda +n_1\alpha_1 +\cdots + n_l\alpha_l$ with respect to the Cartan subalgebra $\widetilde{\mathfrak{h}} =\mathfrak{h}\oplus \mathbb{C}c\oplus \mathbb{C}d$ of $\widetilde{\mathfrak{g}}$.

As in \cite{Bu1, Bu2, Bu3, BK, G1}, to determine the character of $W_{L(\Lambda)}$, we write conditions on energies of quasi-particles of the set $B_{W_{L(\Lambda)}}$ in terms of dual-charge-type elements $r_i^{(s)}$. For a fixed color-type $(n_l; \ldots ; n_1)$, charge-type
$$\Cc =\left( n_{r_l^{(1)},l}, \ldots,  n_{1,l}; \ldots ; n_{r_1^{(1)},1}, \ldots,  n_{1,1}\right)$$ and dual-charge-type
$$\Dc =\left(r^{(1)}_l, \ldots , r^{(k_{\alpha_l})}_l; \ldots ;r^{(1)}_1, \ldots , r^{(k_{\alpha_1})}_1\right),$$
a straightforward calculation shows
\begin{equation} \label{uvjet1}
\sum_{p=1}^{r_{i}^{(1)}} (2(p-1)n_{p,i}+n_{p,i})= \sum_{t=1}^{k_{\alpha_i}}r^{(t)^{2}}_{i}  \quad\text{for }i=1,\ldots , l,\end{equation}
and
\begin{equation} \label{uvjet2}
\sum_{p=1}^{r^{(1)}_{i}}\sum_{q=1}^{r^{(1)}_{i-1}}\mathrm{min}\{\frac{k_{\alpha_i}}{k_{\alpha_{i-1}}}n_{q,i-1},n_{p,i}\}=\sum_{t=1}^{k}\sum_{p=0}^{\nu_i-1}r_{i-1}^{(t)}
r_{i}^{\left(\nu_i t -p\right)}  \quad\text{for }i=2,\ldots , l,\end{equation}
(cf. \cite{Bu1, Bu2, Bu3, BK, G1}). We also have  
\begin{equation}\label{uvjet3}
\sum_{p=1}^{r^{(1)}_{i}}\sum_{t=1}^{n_{p,i}}\delta_{i,j_t}=\sum_{t=1}^{k_{\alpha_i}}r_i^{(t)}\delta_{i,j_t}=\sum_{t=\nu_j k_0+(\nu_j-1)k_j+1}^{k_{\alpha_j}}r_j^{(t)} .
\end{equation}  
The last three identities, difference conditions \eqref{rel7}--\eqref{rel8} and the formula
$$
\frac{1}{(q)_r}=\sum_{n\geqslant 0}p_r(n)q^n,
$$
where $p_r(n)$ denotes the number of partitions of $n$ with at most $r$ parts, therefore imply
\begin{thm}\label{thm_karakter} 
Set  $n_i=\sum_{t=1}^{k_{\alpha_i}}r_i^{(t)}$  for $i=1,\ldots ,l$.
For any rectangular weight $\Lambda = k_0\Lambda_0 + k_j\Lambda_j$ of level $k = k_0 + k_j$  we have
$$ \ch W_{L(\Lambda)}= 
\sum_{\substack{r_{1}^{(1)}\geqslant \cdots\geqslant r_{1}^{(k_{\alpha_1})}\geqslant 0\vspace{-5pt}\\ \vdots\vspace{-2pt} \\r_{l}^{(1)}\geqslant \cdots\geqslant r_{l}^{(k_{\alpha_l})}\geqslant 0}}
\frac{q^{\sum_{i=1}^l\sum_{t=1}^{k_{\alpha_i}}r_i^{(t)^2}
-\sum_{i=2}^{l}\sum_{t=1}^{k}
\sum_{p=0}^{\nu_i-1}r_{i-1}^{(t)}
r_{i}^{\left(\nu_i t -p\right)} +\sum_{t=\nu_j k_0+(\nu_j-1)k_j+1}^{k_{\alpha_j}}r_j^{(t)} }}
{\prod_{i=1}^{l}(q;q)_{r^{(1)}_{i}-r^{(2)}_{i}}\cdots (q;q)_{r^{(k_{\alpha_i})}_{i}}}\,
\prod_{i=1}^{l}y^{n_i}_{i},
$$
where $(a;q)_r=\prod_{i=1}^r (1- aq^{i-1}) \ \ \text{for} \ \ r\geqslant 0$.
\end{thm}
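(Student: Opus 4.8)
The plan is to read off the character directly from the quasi-particle basis $\mathcal{B}_W$ provided by Theorem~\ref{t1}, translating the combinatorial description of $B_W$ into a generating-function identity. First I would fix a color-type $(n_l,\ldots,n_1)$ together with a compatible charge-type $\Cc$ and its dual-charge-type $\Dc$, and count the contribution to $\ch W_{L(\Lambda)}$ coming from all monomials $b\in B_W$ with that data. Since the $y_i$-degree of such a monomial equals $n_i=\sum_{t}r_i^{(t)}$ by definition of the color-type, the factor $\prod_{i=1}^l y_i^{n_i}$ comes out immediately, and the whole problem reduces to computing the $q$-generating function for the allowed energy-types $\Ec=(m_{p,i})$ subject to the difference conditions \eqref{rel7}--\eqref{rel8}.

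The key computational step is the standard ``minimal energy plus free partition'' decomposition used in \cite{Bu1,Bu2,Bu3,BK,G1}. For fixed $\Cc$ the inequalities \eqref{rel7}--\eqref{rel8} have a unique minimal solution $\overline{m}_{p,i}$ (take equality everywhere), and any admissible $\Ec$ is obtained from the minimal one by adding, for each color $i$, a partition whose parts are organized according to the blocks of equal charges in $\Cc_i$; this contributes the factor $1/\big((q;q)_{r_i^{(1)}-r_i^{(2)}}\cdots(q;q)_{r_i^{(k_{\alpha_i})}}\big)$ via $\frac{1}{(q)_r}=\sum_{n\geq 0}p_r(n)q^n$. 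It then remains to evaluate the total minimal energy $-\sum_{p,i}\overline{m}_{p,i}$. Summing \eqref{rel7} over $p$ and $i$ and using the three identities \eqref{uvjet1}, \eqref{uvjet2}, \eqref{uvjet3} to rewrite the three groups of terms — the $-n_{p,i}-2(p-1)n_{p,i}$ part becomes $-\sum_{i,t}r_i^{(t)^2}$ by \eqref{uvjet1}, the color-interaction part becomes $+\sum_{i,t,p}r_{i-1}^{(t)}r_i^{(\nu_i t-p)}$ by \eqref{uvjet2}, and the initial-condition part becomes $-\sum_{t=\nu_jk_0+(\nu_j-1)k_j+1}^{k_{\alpha_j}}r_j^{(t)}$ by \eqref{uvjet3} — yields exactly the exponent of $q$ in the stated formula. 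Finally I would sum over all dual-charge-types, i.e.\ over all $r_i^{(1)}\geq\cdots\geq r_i^{(k_{\alpha_i})}\geq 0$, noting that as $\Cc$ ranges over all charge-types compatible with a given color-type the dual-charge-types $\Dc$ range exactly over such weakly decreasing tuples (conjugation of partitions is a bijection), so the $\Dc$-sum and the color-type sum merge into the single sum displayed in the theorem.

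I expect the main obstacle to be purely bookkeeping rather than conceptual: carefully verifying that the three identities \eqref{uvjet1}--\eqref{uvjet3} hold as stated and that their combination reproduces precisely the exponent $\sum_{i=1}^l\sum_{t=1}^{k_{\alpha_i}}r_i^{(t)^2}-\sum_{i=2}^l\sum_{t=1}^k\sum_{p=0}^{\nu_i-1}r_{i-1}^{(t)}r_i^{(\nu_i t-p)}+\sum_{t=\nu_jk_0+(\nu_j-1)k_j+1}^{k_{\alpha_j}}r_j^{(t)}$, with all the $\nu_i$-factors and the shifted index ranges coming from the initial conditions correctly placed. A secondary point requiring care is the claim that equality in \eqref{rel7}--\eqref{rel8} is actually achievable, i.e.\ that the minimal energy monomial genuinely lies in $B_W$ and that the added partitions stay within the difference conditions; this is exactly the content of the arguments in \cite{Bu1,Bu2,Bu3,BK,G1}, which I would invoke, together with Theorem~\ref{t1} to guarantee that $\mathcal{B}_W$ is a basis and hence that the character count is neither an over- nor an under-count.
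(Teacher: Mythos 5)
Your proposal is correct and follows essentially the same route as the paper: the paper likewise derives the character by combining the quasi-particle basis of Theorem \ref{t1} with the identities \eqref{uvjet1}--\eqref{uvjet3} (which convert the summed difference conditions \eqref{rel7}--\eqref{rel8} into the stated exponent of $q$) and the partition generating function $\frac{1}{(q)_r}=\sum_{n\geq 0}p_r(n)q^n$ for the free part attached to each block of equal charges. The only nitpick is an implicit sign flip in your bookkeeping (you list the pieces of $\sum_{p,i}\overline{m}_{p,i}$ rather than of the minimal energy $-\sum_{p,i}\overline{m}_{p,i}$), but your stated intent makes clear this is notational rather than substantive.
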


\section*{Acknowledgement}
I am very grateful to Slaven Ko\v zi\' c and Mirko Primc for their help, support and valuable comments during the preparation of this work.

This work is partially supported by the QuantiXLie Centre of Excellence, a project cofinanced by the Croatian Government and European Union through the European Regional Development Fund - the Competitiveness and Cohesion Operational Programme (Grant KK.01.1.1.01.0004) and by Croatian Science Foundation under the project 8488.

\end{document}